\DeclareMathOperator{\Li}{Li}
\newtheorem{theorem}{Theorem}[section]
\newtheorem{lemma}{Lemma}[section]
\newtheorem{prop}[theorem]{Proposition}
\newtheorem{cor}[theorem]{Corollary}
\newtheorem{remark}{Remark}[section]
\numberwithin{equation}{section}
\newcommand{\lt}{\left}
\newcommand{\rt}{\right}
\newcommand{\bpm}{\begin{pmatrix}}
\newcommand{\epm}{\end{pmatrix}}
\newcommand{\bsm}{\lt(\begin{smallmatrix}}
\newcommand{\esm}{\end{smallmatrix}\rt)}
\newcommand{\beq}{\begin{equation}}
\newcommand{\eeq}{\end{equation}}
\newcommand{\bal}{\begin{align}}
\newcommand{\vep}{\varepsilon}
\newcommand{\ep}{\epsilon}
\newcommand{\mf}{\mathfrak}
\newcommand{\hf}{\frac{1}{2}}
\newcommand{\mc}{\mathcal}
\title{Bertrand's postulate for Number Fields}
\author{Thomas A. Hulse}
\address{ Thomas A. Hulse \newline
\indent Department of Mathematics and Statistics \newline
\indent Colby College \newline
\indent Waterville, ME, USA \newline
\indent 04901}
       \email{tahulse@colby.edu}
\thanks{Research of the first author was partially supported by a Coleman Postdoctoral Fellowship at Queen's University.}
\author{M. Ram Murty}
\address{M. Ram Murty \newline
\indent Department of Mathematics and Statistics \newline
\indent Queen's University \newline
\indent Kingston, ON, Canada \newline
\indent K7L 3N6
}
       \email{murty@mast.queensu.ca}
\thanks{Research of the second author was partially supported by an NSERC Discovery Grant.}
\date{\today}
\subjclass[2010]{11R44 (primary) and 11R42 (secondary)} 
\keywords{Bertrand's postulate, Dedekind zeta function, prime ideals}
\begin{document}
\maketitle
\begin{abstract}
Consider an algebraic  number field, $K$, and its ring of integers, $\mc{O}_K$. There exists a smallest $B_K>1$ such that for any $x>1$ we can find a prime ideal, $\mf{p}$, in $\mc{O}_K$ with norm $N(\mf{p})$ in the interval $[x,B_Kx]$. This is a generalization of Bertrand's postulate to number fields, and in this paper we produce bounds on $B_K$ in terms of the invariants of $K$ from an effective prime ideal theorem due to Lagarias and Odlyzko \cite{LO}. We also show that a bound on $B_K$ can be obtained from an asymptotic estimate for the number of ideals in $\mc{O}_K$ less than $x$. 
\end{abstract}

\section{Introduction}


Predating the prime number theorem, Bertrand's postulate was first put forward by Joseph Bertrand in 1845 and proved
by Chebyshev in 1850. It states that, for any $x>1$, a prime number can be found in the interval $[x,2x]$. This is generally considered to be a much weaker result than the prime number theorem, as one can use the asymptotic behavior of the prime counting function, $\pi(x)$, to show that for any $A>1$ there exists $x_A>1$ such that for any $x> x_A$ we have $\pi(Ax)-\pi(x)>0$, and so there is a prime number in the interval $[x,Ax]$. Thus, in principle, one can use the prime number theorem to bound $x_A$ from above and find a lowest possible $x_A$ by employing a finite search. Indeed, Betrand's postulate itself can be recovered using more precise upper and lower bounds for $\pi(x)$, like those due to Dusart \cite{D} which arise from numerical verification of the Riemann hypothesis for the first $1.5 \cdot 10^9$ zeros.


Outside of historical interest, however, one of the main benefits of Bertrand's postulate is that it gives information about the distribution of primes when $x$ is small. Furthermore, it accomplishes this without requiring information about the zeros of the Riemann zeta function. Indeed, Bertrand's postulate benefits from having many short and often elegant, elementary proofs  \cite{E,Rj,MM}.

One may similarly investigate a variant of Bertrand's postulate for the distribution of prime ideals in the ring of integers, $\mc{O}_K$, of an algebraic number field, $K$. That is, for general $K$, we ask if we can find $B$ such that for all $x>1$ there exists a prime ideal $\mathfrak{p}$ in $\mc{O}_K$ with norm $N(\mathfrak{p}) \in [x,Bx]$. Indeed, if such a $B$ exists for any given number field we can define the \emph{Bertrand constant}, $B_K$, to be the best such $B$,
\beq
B_K := \min \left\{B >1 \ | \ \forall x>1, \  \exists \  \mf{p} \subseteq \mc{O}_K, \  N(\mathfrak{p}) \in [x,Bx] \right\}.
\eeq
We can define $B_K$ to be a minimum instead of an infimum, for if $B_K$ is the infimum of the non-empty set then for any $x>1$ we can find a prime ideal with norm in $[x,(B_K+\vep)x]$ for any $\vep>0$. Since the norms of ideals are rational integers, we can take $\vep>0$ small enough so that there must be a prime ideal with norm in $[x,B_K x]$, and thus $B_K$ is an element of the above set.

We know that $B_K$ must exist due to the prime ideal theorem, first proven by Landau in 1903, which states that for $x>1$,
\beq \label{pit}
\pi_K(x) \sim \frac{x}{\log x},
\eeq
where $\pi_K(x)$ counts the number of prime ideals in $\mc{O}_K$ with norm less than $x$. Generally, this theorem is given in the more effective form,
\beq
\pi_K(x) = \Li(x) + O_K\left(xe^{-c_K \sqrt{\log x}}\right),
\eeq
where $c_K>0$ is dependent on $K$.

 Our $B_K$ and other similarly-defined constants would allow us to produce analogues of Bertrand's postulate for the number field $K$, and though questions about the distribution of prime ideals are of great interest, it appears no attention has been paid to this problem outside of the case where $K= \mathbb{Q}$.
We would like to investigate $B_K$ for a non-trivial number field and the dependence it has on the invariants of the number field. 

As with the proof of the prime number theorem, the prime ideal theorem is obtained by finding a zero-free region of the Dedekind zeta function, $\zeta_K(s)$, which is defined for $\Re(s)>1$ as 
\beq \label{ded}
\zeta_K(s) :=\sum_{\substack{\mathfrak{a} \subseteq \mc{O}_K \\ \mf{a} \neq (0)}} \frac{1}{N(\mathfrak{a})^s}= \sum_{n=1}^\infty \frac{c_K(n)}{n^s}, 
\eeq
where $\mathfrak{a}$ are the ideals in $\mc{O}_K$, and has a meromorphic continuation to all $s \in \mathbb{C}$. Like the Riemann zeta function, $\zeta_K(s)$ also has a functional equation and has
only one pole at $s=1$, which is simple with residue $\rho_K$. This residue is related to the invariants of $K$ by the formula
\beq
\rho_K = \frac{2^{r_1}(2\pi)^{r_2} h_K R_K}{w_K \sqrt{|\Delta_K|}}.
\eeq
Here $\Delta_K$ is the discriminant of $K$,  $d=[K:\mathbb{Q}]$ is its degree, $h_K$ is its class number, $R_K$ is its regulator, $w_K$ is the number of roots of unity contained in $K$, and $r_1$ and $r_2$ are the number of real and complex embeddings of $K$, respectively. 

In 1977, Lagarias and Odlyzko \cite{LO} were able to state effective versions of the Chebotarev density theorem, which generalizes the prime ideal theorem to prime ideals whose Frobenius automorphisms lie in fixed conjugacy classes. This in turn specializes to an effective version of the prime ideal theorem, which we formulate here:
\begin{theorem}[Lagarias, Odlyzko \cite{LO}] \label{ell1}
If $K$ is a number field, there exist effectively computable positive constants $c_1$ and $c_2$, independent of $K$, such that if $x \geq \exp(10d(\log|\Delta_K|)^2)$ then
\beq \label{ello}
|\pi_K(x)-\Li(x)+(-1)^{\ep_K} \Li(x^\beta)| \leq c_1x \exp\left(-c_2 \sqrt{\frac{\log x}{d}}\right),
\eeq
where $\Li(x^\beta)$ only occurs if there exists an exceptional real simple zero, $\beta$, of $\zeta_K(s)$ such that $1-(4\log |\Delta_K|)^{-1}<\beta<1$. Also $\ep_K=0$ or $1$, depending on $K$. 

\vspace{2 mm}

\noindent If the Generalized Riemann Hypothesis (GRH) holds for $\zeta_K(s)$ then there exists an effectively computable positive absolute constant $c_3$ such that for $x>2$,
\beq\label{GRH}
|\pi_K(x)-\Li(x)| \leq c_3 \left(x^{\hf}\log(|\Delta_K|x^d) \right).
\eeq
\end{theorem}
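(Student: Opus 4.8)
The plan is to obtain Theorem~\ref{ell1} by reconstructing the classical analytic proof of the prime ideal theorem for $\zeta_K(s)$ while tracking the dependence of every constant on $d$ and $|\Delta_K|$; equivalently, one specializes the effective Chebotarev density theorem of \cite{LO} to the trivial extension $K/K$. Either way the three analytic inputs are an explicit formula relating the weighted counting function $\psi_K(x)=\sum_{N(\mf{p}^m)\le x}\log N(\mf{p})$ to the nontrivial zeros of $\zeta_K$, an effective zero-free region for $\zeta_K$, and an effective count of the zeros in a bounded region.

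First I would work with $\psi_K$ rather than $\pi_K$, since $-\zeta_K'/\zeta_K(s)=\sum_{\mf{p},m}(\log N\mf{p})\,N(\mf{p})^{-ms}$ for $\Re(s)>1$. Applying a truncated (or smoothed) Perron formula at height $T$ and moving the contour to the left of the line $\Re(s)=1$, one collects the main term $x$ from the simple pole of $\zeta_K$ at $s=1$, a term $-x^\beta/\beta$ from the possible exceptional real zero $\beta$, a sum $-\sum_{\rho}x^\rho/\rho$ over the remaining nontrivial zeros with $|\Im\rho|\le T$, and harmless contributions from the trivial zeros and the archimedean gamma factors. To control $\sum_\rho x^\rho/\rho$ I would invoke (i) the Lagarias--Odlyzko zero-free region, that $\zeta_K(s)\ne 0$ for $\Re(s)>1-\bigl(c\max\{\log|\Delta_K|,\,d\log(|t|+2)\}\bigr)^{-1}$ apart from at most one exceptional real zero, and (ii) the density bound that the number of zeros with imaginary part in $[t,t+1]$ is $O(\log|\Delta_K|+d\log(|t|+2))$, which follows from Jensen's formula applied to the completed zeta function $\Lambda_K(s)$ together with Stirling's formula for its gamma factors. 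Balancing the length $T$ of the contour shift against the width of the zero-free region produces an error of size $x\exp(-c_2\sqrt{(\log x)/d})$, and the bookkeeping only closes when $\log x$ dominates the relevant product of $d$ and $(\log|\Delta_K|)^2$, which is why the hypothesis $x\ge\exp(10d(\log|\Delta_K|)^2)$ is imposed; when a Siegel zero $\beta$ is present, the Deuring--Heilbronn phenomenon is needed to guarantee that the next zero sits far enough to the left for the stated estimate to persist.

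Next I would pass from $\psi_K$ to $\pi_K$. Subtracting the prime-power contribution $\sum_{m\ge 2}\pi_K(x^{1/m})=O(d\sqrt{x}\log x)$, which is absorbed into the error term, and writing $\pi_K(x)-\sum_{m\ge2}\pi_K(x^{1/m})=\int_{2^-}^{x}\frac{d\psi_K(t)}{\log t}$, partial summation turns the main term $x$ into $\Li(x)$ and the term $x^\beta/\beta$ into $\Li(x^\beta)$, the sign $(-1)^{\ep_K}$ recording whether the exceptional zero actually occurs. For the GRH statement the argument is shorter: the zero-free region becomes $\Re(s)>\tfrac12$, there is no exceptional zero, so $\sum_\rho x^\rho/\rho\ll x^{1/2}\sum_{|\Im\rho|\le T}|\rho|^{-1}\ll x^{1/2}(\log|\Delta_K|+d\log T)\log T$, and choosing $T$ of order $x^{1/2}$ (with a smoothed explicit formula) yields a bound of the shape $c_3\,x^{1/2}\log(|\Delta_K|x^d)$.

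The hard part will be the uniform effective control of the zero-free region: one must run the de~la~Vall\'ee~Poussin argument --- positivity of $3+4\cos\theta+\cos2\theta$ applied to $\zeta_K(\sigma)^3\zeta_K(\sigma+it)^4\zeta_K(\sigma+2it)$ --- with the resulting width depending only on $\max\{\log|\Delta_K|,\,d\log(|t|+2)\}$, and then deal with the single possible real exceptional zero separately. Threading the dependence on the degree $d$ (not merely on $|\Delta_K|$) correctly through both the zero-free region and the zero count is the delicate point; the rest is bookkeeping in the explicit formula and partial summation. Since the full verification of these inputs is carried out in \cite{LO}, I would content myself here with the reduction above and refer to that paper for the details.
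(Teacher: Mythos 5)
The paper contains no proof of this statement: Theorem~\ref{ell1} is quoted verbatim as a black box from Lagarias and Odlyzko \cite{LO} (specialized to the trivial extension $K/K$, i.e., to the conjugacy class consisting of the identity), and the text around it does nothing more than cite that paper. Your sketch is therefore not competing with an argument in the present paper but with the argument in \cite{LO}, and you correctly acknowledge this at the end. As an outline of that argument it is sound: work with $\psi_K$ via an explicit formula obtained from a truncated Perron integral, control $\sum_\rho x^\rho/\rho$ using the effective zero-free region of width $\bigl(c\max\{\log|\Delta_K|,\,d\log(|t|+2)\}\bigr)^{-1}$ together with the zero-count $N_K(t,t+1)\ll \log|\Delta_K|+d\log(|t|+2)$, isolate the possible exceptional real zero, and then convert back to $\pi_K$ by partial summation; under GRH the contour can be pushed to $\Re(s)=\tfrac12$ and the $T$-choice gives the $x^{1/2}\log(|\Delta_K|x^d)$ bound. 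This is indeed the Lagarias--Odlyzko strategy.

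One small technical slip worth flagging: the identity you write for passing from $\psi_K$ to $\pi_K$, namely $\pi_K(x)-\sum_{m\ge 2}\pi_K(x^{1/m})=\int_{2^-}^x \frac{\d\psi_K(t)}{\log t}$, is not correct. The right relation is $\int_{2^-}^x \frac{\d\psi_K(t)}{\log t}=\sum_{m\ge 1}\tfrac{1}{m}\,\pi_K(x^{1/m})$, from which one recovers $\pi_K(x)$ either by M\"obius inversion or, more simply, by observing that the $m\ge 2$ terms are $O(d\,x^{1/2}\log x)$ and get absorbed into the error. The overall reduction still goes through, but the formula as written would not. I would also be a bit more careful about whether the Deuring--Heilbronn repulsion is strictly needed to reach the form \eqref{ello}: the statement only separates the contribution $\Li(x^\beta)$ of the one possible exceptional zero, and the remaining zeros are already handled by the standard zero-free region; Deuring--Heilbronn enters in \cite{LO} primarily to improve the location of $\beta$ itself and is used more heavily elsewhere in that paper.
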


By using the above estimates, we can make an effort to determine when $\pi_K(Ax)-\pi_K(x)>0$. The possible exceptional zero complicates what would otherwise be a fairly straightforward computation, and so we make use of an upper bound due to Stark \cite{stark}, which itself depends on whether or not $K$ is a normal field extension. The proof of the following theorem can be found in Section \ref{pidb}.


%

\begin{theorem} \label{goodcor}
Let $K\neq \mathbb{Q}$ be a finite field extension of $\mathbb{Q}$ such that there exists a tower of fields $\mathbb{Q} =K_0 \subset K_1 \subset K_2 \subset \cdots \subset K_m = K$ where each $K_i$ is a finite normal extension of $K_{i-1}$. For any $A>1$ there exists $c_A>0$, dependent only on $A$, such that for  $x > \exp(c_Ad(\log|\Delta_K|)^2)$, there is a prime ideal $\mf{p}$ in $\mc{O}_K$ with $N(\mf{p}) \in [x,A x]$. 

\vspace{2 mm}

\noindent Now suppose that $K\neq \mathbb{Q}$ be a finite, but the tower of normal field extensions does not exist. Let $\log |\Delta_K| \gg d (\log d)^\alpha$ for $\alpha \in [0,1]$. For any $A>1$ there exists $c_A>0$, dependent only on $A$, such that for  $x > \exp(c_Ad(\log d)^{2-2\alpha}(\log|\Delta_K|)^2)$, there is a prime ideal $\mf{p}$ in $\mc{O}_K$ with $N(\mf{p}) \in [x,A x]$. 

\vspace{2 mm}

\noindent Now only suppose that $K \neq \mathbb{Q}$ is a number field. If the GRH holds then for any $A>0$ there exists $c_A$, dependent only on $A$, such that for 
\[
x>c_A (\log|\Delta_K|+d)^2\log^4( \log |\Delta_K|+ d)
\]
 there is a prime ideal $\mf{p}$ in $\mc{O}_K$ with $N(\mf{p}) \in [x,Ax]$. 
\end{theorem}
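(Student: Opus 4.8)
The plan is to reduce the existence of a prime ideal with norm in $[x,Ax]$ to showing $\pi_K(Ax)-\pi_K(x)>0$, and to establish the latter by feeding the effective prime ideal estimates of Theorem~\ref{ell1} into the difference $\pi_K(Ax)-\pi_K(x)$. First I would write
\[
\pi_K(Ax)-\pi_K(x) = \bigl(\Li(Ax)-\Li(x)\bigr) - (-1)^{\ep_K}\bigl(\Li((Ax)^\beta)-\Li(x^\beta)\bigr) + E(x),
\]
where $E(x)$ is bounded by $2c_1 Ax\exp(-c_2\sqrt{(\log x)/d})$ once $x \geq \exp(10d(\log|\Delta_K|)^2)$ (so that the hypothesis of \eqref{ello} holds at both $x$ and $Ax$). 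The main term $\Li(Ax)-\Li(x)$ is of size $\asymp (A-1)x/\log x$; the goal is to show that for $x$ past the stated threshold this dominates both the possible exceptional-zero contribution and the error $E(x)$.

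The key steps, in order: (i) bound $\Li(Ax)-\Li(x)$ below by something like $\tfrac{(A-1)x}{2\log(Ax)}$ for $x$ large; (ii) bound the exceptional-zero term $|\Li((Ax)^\beta)-\Li(x^\beta)|$ above — here I would use $\beta < 1$ together with Stark's upper bound on $1-\beta$ to control how close $x^\beta$ is to $x$; this is where the distinction between the tower-of-normal-extensions case and the general case enters, since Stark gives $1-\beta \gg (d|\Delta_K|^{1/d}\log|\Delta_K|)^{-1}$ in general but the stronger $1-\beta \gg (\log|\Delta_K|)^{-1}$ (up to constants) when a tower of normal extensions exists, and one then converts $|\Delta_K|^{1/d}$ into a factor involving $d$ and $\log d$ via the Minkowski-type bound $\log|\Delta_K| \gg d$, refined under the hypothesis $\log|\Delta_K| \gg d(\log d)^\alpha$; (iii) compare $x\exp(-c_2\sqrt{(\log x)/d})$ against $(A-1)x/\log x$, which forces $\log x$ to exceed a constant multiple of $d(\log\log\text{-type terms})$, already subsumed by the threshold $x > \exp(c_A d(\log|\Delta_K|)^2)$ coming from step (ii) and the Lagarias--Odlyzko hypothesis; (iv) assemble the constants into a single $c_A$ depending only on $A$. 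For the GRH part, I would instead use \eqref{GRH}: there is no exceptional zero, so $\pi_K(Ax)-\pi_K(x) \geq (\Li(Ax)-\Li(x)) - 2c_3 x^{1/2}\log(|\Delta_K|(Ax)^d)$, and I need $x^{1/2}\log(|\Delta_K|x^d) = o\bigl(x/\log x\bigr)$ with explicit threshold; unwinding $\log(|\Delta_K|x^d) \ll \log|\Delta_K| + d\log x$ and solving for $x$ gives the stated bound $x > c_A(\log|\Delta_K|+d)^2\log^4(\log|\Delta_K|+d)$.

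The main obstacle I anticipate is step (ii): handling the exceptional zero cleanly. The term $\Li(x^\beta)$ is not negligible a priori — if $\beta$ were genuinely as large as $1-(4\log|\Delta_K|)^{-1}$, then $x^\beta = x\cdot x^{-1/(4\log|\Delta_K|)}$, and for $x$ only moderately larger than $\exp(10d(\log|\Delta_K|)^2)$ the factor $x^{-1/(4\log|\Delta_K|)}$ is bounded away from $0$, so $\Li((Ax)^\beta)-\Li(x^\beta)$ is genuinely comparable in size to the main term and could, with the sign $(-1)^{\ep_K}=-1$, reinforce rather than cancel, or with $(-1)^{\ep_K}=+1$ eat into it. The resolution is to push $x$ large enough that $x^{\beta-1}$ is small, i.e. $(1-\beta)\log x$ is large; using Stark's lower bound on $1-\beta$, this is precisely what dictates the shape $\exp(c_A d(\log|\Delta_K|)^2)$ (resp. the modified exponent with $(\log d)^{2-2\alpha}$) in the threshold. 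Making the interaction of the three competing lower/upper bounds fully explicit, and checking that a single constant $c_A$ (depending only on $A$, not on $K$) can be chosen uniformly, is the delicate bookkeeping; I would isolate it in the general estimate and then specialize.
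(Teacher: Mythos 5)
Your plan is the same one the paper uses: reduce to $\pi_K(Ax)-\pi_K(x)>0$, feed in the Lagarias--Odlyzko estimate, treat the exceptional-zero term as the main obstacle, control it via Stark's bound on $\beta$, and then check that increasing $c_A$ past a uniform threshold kills the error terms. The GRH case and the no-exceptional-zero case you sketch match the paper's argument step for step, and your diagnosis of the exceptional zero as the delicate point is exactly right.

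Where your sketch goes astray is in the form of Stark's bound, and this matters for the second part of the theorem. You write ``Stark's upper bound on $1-\beta$'' but then assert $1-\beta \gg \cdots$, which is a lower bound on $1-\beta$ (equivalently an upper bound on $\beta$); that is just a verbal slip, since the lower bound on $1-\beta$ is indeed what you need to push $x^{\beta-1}$ small. The more substantive issue is the shape of Stark's estimate you quote. The paper uses: $1-\beta \gg |\Delta_K|^{-1/d}$ for $K$ normal (and more generally whenever $|\Delta_K|^{1/d}$ is not too large compared to $\log|\Delta_K|$); when this fails, Stark gives the weaker $1-\beta \gg (\log|\Delta_K|)^{-1}$ (with an absolute constant) in the tower case, but only $1-\beta \gg (d!\,\log|\Delta_K|)^{-1}$ in the general case. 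The factorial is essential: combined with Stirling, the extra $d!$ is precisely what forces the additional factor $(\log d)^{2-2\alpha}$ in the threshold $x > \exp\bigl(c_A d(\log d)^{2-2\alpha}(\log|\Delta_K|)^2\bigr)$, together with the hypothesis $\log|\Delta_K| \gg d(\log d)^\alpha$. Your version $1-\beta \gg (d\,|\Delta_K|^{1/d}\log|\Delta_K|)^{-1}$ does not have the $d!$ and, if pursued, would not reproduce the stated exponent in the non-tower case; the $(\log d)^{2-2\alpha}$ is not a consequence of massaging $|\Delta_K|^{1/d}$ alone. To complete the proof along these lines you need the correct, factorial-weighted form of Stark's bound in the non-tower case.
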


\begin{remark}
We note from Minkowski's bound and Stirling's approximation that it is always the case that $\log |\Delta_K| \gg d$. So by specifying that $\log |\Delta_K| \gg d (\log d)^\alpha$ for $\alpha \in [0,1]$ we are not excluding any cases. 
\end{remark}

\noindent Since increasing $A$ means that we can decrease $c_A$, we can eventually get the following corollary extending Bertrand's postulate to number fields. 

\begin{cor} \label{goodcor2} There exists an absolute constant $c$ such that for any number field, $K\neq \mathbb{Q}$,  we have that 
\begin{itemize}
\item[a) ] $B_K \leq \exp(cd(\log |\Delta_K|)^2)$ if there exists a tower of fields $\mathbb{Q} =K_0 \subset K_1 \subset K_2 \subset \cdots \subset K_m = K$ where each $K_i$ is a finite normal extension of $K_{i-1}$.
\vspace{2mm}
\item[b) ] $B_K \leq \exp(cd(\log d)^{2-2\alpha} (\log |\Delta_K|)^2)$  if the tower of fields does not exist and when $\log |\Delta_K| \gg d (\log d)^\alpha$ for $\alpha \in [0,1]$. 
\vspace{2mm}
\item[c) ] $B_K\leq c  (\log|\Delta_K|+d)^2\log^4( \log |\Delta_K|+ d)$ if the GRH holds,
\end{itemize}

%
\end{cor}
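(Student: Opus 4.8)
The plan is to read off each part of Corollary~\ref{goodcor2} directly from the corresponding part of Theorem~\ref{goodcor}, the only real content being how one handles small $x$. Fix any $A>1$ (say $A=2$); then the associated constant $c_A$ is a fixed positive number, and write $X_A$ for the threshold appearing in the relevant case of Theorem~\ref{goodcor}: so $X_A=\exp(c_Ad(\log|\Delta_K|)^2)$ in (a), $X_A=\exp(c_Ad(\log d)^{2-2\alpha}(\log|\Delta_K|)^2)$ in (b), and $X_A=c_A(\log|\Delta_K|+d)^2\log^4(\log|\Delta_K|+d)$ in (c). I claim that $B:=2AX_A$ has the defining property of $B_K$, i.e.\ that for every $x>1$ there is a prime ideal $\mf{p}\subseteq\mc{O}_K$ with $N(\mf{p})\in[x,Bx]$. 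Granting this, the stated bounds follow, because $2A$ is an absolute constant which can be absorbed: in (c) directly into $c:=2Ac_A$, and in (a)--(b) into the exponent, using that $d(\log|\Delta_K|)^2$ (respectively $d(\log d)^{2-2\alpha}(\log|\Delta_K|)^2$) is bounded below by an absolute positive constant, since $K\neq\Q$ forces $d\ge2$ and $|\Delta_K|\ge3$.

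For $x>X_A$ there is nothing to do: Theorem~\ref{goodcor} applies and produces a prime ideal with norm in $[x,Ax]\subseteq[x,Bx]$, since $B\ge A$. The interesting range is $1<x\le X_A$, which lies below the threshold at which the effective prime ideal estimates are valid, so searching near $x$ itself is not an option. Instead I would apply Theorem~\ref{goodcor} a single time, at the point $x_0:=2X_A>X_A$, to obtain one fixed prime ideal $\mf{p}$ with
\[
N(\mf{p})\in[2X_A,\,2AX_A]=[2X_A,\,B].
\]
This single ideal works simultaneously for every $x\in(1,X_A]$: one has $N(\mf{p})\ge 2X_A>X_A\ge x$, and $N(\mf{p})\le B<Bx$ because $x>1$, so $N(\mf{p})\in[x,Bx]$. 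Combined with the previous sentence this covers all $x>1$, so $B_K\le B=2AX_A$, and the three cases are identical except for the explicit form of $X_A$ and the routine bookkeeping to absorb $2A$.

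The one pitfall to avoid — and the reason the argument must be organized this way — is the naive route of covering $1<x\le X_A$ via classical Bertrand's postulate over $\Q$: picking a rational prime $p\in[x,2x]$ and passing to a prime ideal above it only guarantees a norm as large as $p^{d}\le(2x)^{d}$, which would force $B$ of order $2^{d}X_A^{\,d-1}$ and thereby replace the factor $d$ by $d^{2}$ in every exponent. The observation that a prime ideal sitting just beyond $X_A$ is automatically within a bounded multiple of \emph{every} point of $(1,X_A]$ removes the degree from the estimate entirely, so that Corollary~\ref{goodcor2} becomes a purely formal consequence of Theorem~\ref{goodcor} requiring no further analytic input; the bulk of the work is already contained in Theorem~\ref{goodcor} itself.
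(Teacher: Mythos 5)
Your proof is correct and takes essentially the approach the paper intends (the paper itself gives only the one-line hint that $c_A$ decreases as $A$ increases). Applying Theorem~\ref{goodcor} once at a point just above the threshold $X_A$ to obtain a single prime ideal whose norm simultaneously serves every $x\in(1,X_A]$, and then absorbing the resulting constant factor into the exponent via the lower bound $d(\log|\Delta_K|)^2\gg 1$ for $K\neq\Q$, is exactly the bookkeeping the paper leaves implicit.
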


The proof of the effective prime ideal theorem is quite technically involved. Since some of our interest in Bertrand's postulate is due to the brevity and elegance of the proofs for it, one would hope that in generalizing Bertrand's postulate to number fields we could obtain a comparable result using a less elaborate argument and without requiring information about the zeros of $\zeta_K(s)$. 

Let
\beq \label{eff1}
f_1(x,K) :=\left( \sum_{n \leq x} c_K(n)\right) - \rho_K x,
\eeq
where, as above, $c_K(n)$ are the coefficients of $\zeta_K(s)$ and $\rho_K$ is the residue of $\zeta_K(s)$ at $s=1$. That is, $f_1(x,K)$ is the error term for the number of ideals in $\mc{O}_K$ with norm less than $x$.
Information about $f_1(x,K)$ alone is enough information to obtain a generalized Bertrand's postulate for a finite field extension, $K$ of $\mathbb{Q}$. We prove the following result in Section \ref{bpc}.

\begin{theorem} \label{bigt} Let $K$ be a number field, as above.
Suppose for fixed $0<\alpha<1$, there exists some $\mc{C}_K>0$, determined by the invariants of $K$, such that
\beq \label{generaleff1}
|f_1(x,K)| \leq \mc{C}_Kx^{\alpha} 
\eeq
for all $x\geq 1$. Then for any $x>1,$ a prime ideal, $\mathfrak{p}$, exists in $\mathcal{O}_K$ such that $N(\mathfrak{p}) \in [x,Ax]$, whenever 
\beq \label{abound}
 \log A \gg \frac{\mc{C}_K}{\rho_K}\left(\frac{d+2}{1-\alpha}\right)+d.
\eeq
\end{theorem}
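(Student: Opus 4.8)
The plan is a Chebyshev–type argument built entirely out of the hypothesis \eqref{generaleff1} together with the classical elementary estimates over $\Q$ (Chebyshev's bound and Bertrand's postulate for $\Q$, neither of which refers to zeros of any $L$–function). Write $I_K(y):=\#\{\mathfrak a\subseteq\Or_K:N(\mathfrak a)\le y\}=\rho_K y+f_1(y,K)$, let $\Lambda_K$ be the von Mangoldt function of $K$ (so $\Lambda_K(\mathfrak p^{k})=\log N(\mathfrak p)$ and $\Lambda_K=0$ otherwise), and put $\psi_K(y)=\sum_{N(\mathfrak a)\le y}\Lambda_K(\mathfrak a)$, $\theta_K(y)=\sum_{N(\mathfrak p)\le y}\log N(\mathfrak p)$. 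Since $\sum_{\mathfrak p\mid p}f_{\mathfrak p}\le d$ for every rational prime $p$, one gets $\theta_K(y)\le d\,\theta(y)$ and $\psi_K(y)\le d\,\psi(y)=O(dy)$ with an absolute implied constant, and hence by partial summation $\sum_{N(\mathfrak d)\le y}\Lambda_K(\mathfrak d)N(\mathfrak d)^{-\beta}=O\!\big(dy^{1-\beta}/(1-\beta)\big)$ for $0\le\beta<1$. A routine Abel summation from \eqref{generaleff1} gives
\[
T_K(y):=\sum_{N(\mathfrak a)\le y}\log N(\mathfrak a)=\rho_K y\log y-\rho_K y+O\!\Big(\rho_K+\mathcal C_K y^{\alpha}\big(\log y+\tfrac1\alpha\big)\Big),
\]
and summing the identity $\log N(\mathfrak a)=\sum_{\mathfrak d\mid\mathfrak a}\Lambda_K(\mathfrak d)$ over $N(\mathfrak a)\le y$ gives the convolution identity $T_K(y)=\sum_{N(\mathfrak d)\le y}\Lambda_K(\mathfrak d)\,I_K(y/N(\mathfrak d))$.

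Next, fix $x>1$ and split
\[
T_K(Ax)-T_K(x)=\underbrace{\sum_{N(\mathfrak d)\le x}\Lambda_K(\mathfrak d)\Big[I_K\!\big(\tfrac{Ax}{N(\mathfrak d)}\big)-I_K\!\big(\tfrac{x}{N(\mathfrak d)}\big)\Big]}_{\Sigma_1}+\underbrace{\sum_{x<N(\mathfrak d)\le Ax}\Lambda_K(\mathfrak d)\,I_K\!\big(\tfrac{Ax}{N(\mathfrak d)}\big)}_{\Sigma_2}.
\]
In $\Sigma_1$ I would substitute $I_K(y)=\rho_K y+f_1(y,K)$; the main part $\rho_K(A-1)x\sum_{N(\mathfrak d)\le x}\Lambda_K(\mathfrak d)/N(\mathfrak d)$ is, by a second use of the convolution identity, exactly $(A-1)\big(T_K(x)-R(x)\big)$ with $R(x):=\sum_{N(\mathfrak d)\le x}\Lambda_K(\mathfrak d)f_1(x/N(\mathfrak d),K)=O\!\big(\mathcal C_K d\,x/(1-\alpha)\big)$ by \eqref{generaleff1} and the partial–summation bound above; the $f_1$–contributions to $\Sigma_1$ are bounded the same way. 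Hence $\Sigma_1=(A-1)T_K(x)+O\!\big(A\mathcal C_K d\,x/(1-\alpha)\big)$, the $\rho_K y\log y$–terms cancel in $T_K(Ax)-A\,T_K(x)$, and one is left with
\[
\Sigma_2=T_K(Ax)-A\,T_K(x)+O\!\Big(\tfrac{A\mathcal C_K d\,x}{1-\alpha}\Big)=\rho_K Ax\log A+O\!\Big(A\rho_K+A\mathcal C_K x^{\alpha}\big(\log(Ax)+\tfrac1\alpha\big)+\tfrac{A\mathcal C_K d\,x}{1-\alpha}\Big).
\]

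The endgame is to make the error terms small. Once $\log A\gg\frac{\mathcal C_K}{\rho_K}\cdot\frac{d}{1-\alpha}$ the term $A\mathcal C_K d\,x/(1-\alpha)$ is at most $\tfrac14\rho_K Ax\log A$, the term $A\rho_K$ is negligible for $x\ge1$, and $A\mathcal C_K x^{\alpha}(\log(Ax)+\tfrac1\alpha)\le\tfrac14\rho_K Ax\log A$ once $x$ exceeds a threshold $X_0$ that is only a fixed power of $\frac{\mathcal C_K}{\rho_K}(\log A+1)$ and $\frac1{1-\alpha}$; in particular $X_0$ is at most a power of the right side of \eqref{abound}, so $X_0\ll A$. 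Thus $\Sigma_2\ge\tfrac12\rho_K Ax\log A$ for $x\ge X_0$. To extract a genuine prime I would write $\Sigma_2=\Sigma_2'+\Sigma_2''$ according to $\mathfrak d=\mathfrak p$ versus $\mathfrak d=\mathfrak p^{k}$ with $k\ge2$, and bound $\Sigma_2''$ by splitting dyadically at $N(\mathfrak p)=2x^{1/k}$, using $I_K(Ax/N(\mathfrak p)^{k})\le I_K(A)=O(\rho_K A)$ on the low part and $I_K(Ax/N(\mathfrak p)^{k})=O(\rho_K Ax/N(\mathfrak p)^{k})$ on the high part; this yields $\Sigma_2''=O(\rho_K A d\sqrt x)$, which is $<\tfrac14\rho_K Ax\log A$ for all $x\ge1$ because $\log A\gg d$. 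Hence $\Sigma_2'>0$ and there is a prime ideal $\mathfrak p$ with $N(\mathfrak p)\in(x,Ax]$ whenever $x\ge X_0$. Finally, for $1<x<X_0$ I would invoke Bertrand over $\Q$: the least prime ideal norm is at most $2^{d}$ (a prime above $2$), so it lies in $[x,Ax]$ as soon as $Ax\ge 2^{d}$; and for consecutive prime ideal norms $m<m'$ with $m<X_0$, choosing a rational prime $q\in(m,2m]$ and a prime of $\Or_K$ above it shows $m'\le q^{d}\le(2m)^{d}$, i.e. $m'/m\le 2^{d}m^{d-1}\le 2^{d}X_0^{d-1}\le A$, the last bound holding because $\log X_0$ is only logarithmic in the right side of \eqref{abound} — this is precisely what the additive $+d$ there provides — so $[x,Ax]$ again meets the set of prime ideal norms.

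I expect the main obstacle to be the error bookkeeping in $\Sigma_1$ and in the asymptotic for $T_K$: checking that every error term is genuinely dominated by the main term $\rho_K Ax\log A$ under the exact hypothesis \eqref{abound} is what forces the precise shape of that bound (the factor $\frac{d+2}{1-\alpha}$ in particular), and this requires careful tracking of constants, as does the calibration of $X_0$ that makes the Chebyshev range $x\ge X_0$ and the elementary range $1<x<X_0$ overlap correctly.
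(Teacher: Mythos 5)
Your proposal is correct in broad strategy but follows a genuinely different route from the paper. You attack the Chebyshev-type difference $T_K(Ax)-T_K(x)$ with a $\Sigma_1+\Sigma_2$ split and try to show the tail piece $\Sigma_2$ (over $x<N(\mathfrak d)\le Ax$) is positive and dominated by its prime part, whereas the paper works with the \emph{Mertens-type} sum
$\sum_{N(\mathfrak p)\le x}\log N(\mathfrak p)/N(\mathfrak p)$. Using the same convolution identity, the paper rewrites this sum (via \eqref{ceecue}) as $\log x + E(x) + O(d)$ where $E(x)=\frac{1}{x\rho_K}\big(f_2(x,K)-\sum_{n\le x}\Lambda_K^\#(n)f_1(x/n,K)\big)$, and the crucial point is that the $1/n$-weighting damps the error so that $E(x)=O\big(\frac{\mathcal C_K}{\rho_K}\frac{d+2}{1-\alpha}\big)$ \emph{uniformly for all} $x\ge 1$. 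Taking the difference at $Ax$ and $x$ then immediately gives $\log A + O(\cdots)$, and positivity for $\log A$ large settles the theorem for every $x>1$ in one stroke. Your route gives up that uniformity: the main term $\rho_K Ax\log A$ in $\Sigma_2$ only dominates the $\mathcal C_K x^\alpha$-type errors once $x$ exceeds a threshold $X_0$, forcing the separate elementary range $1<x<X_0$ that you handle via Bertrand over $\Q$ and prime-ideal-norm spacing. That second stage is a nice idea, but it is exactly the bookkeeping the Mertens normalization makes unnecessary, and it is where your argument is most exposed: the claim $\Sigma_2''=O(\rho_K A d\sqrt{x})$ does not follow just from $I_K(Ax/N(\mathfrak p)^k)\le I_K(A)$ — one needs the refined bound $I_K(y)\le\rho_K y+\mathcal C_K y^\alpha$ and then must check that the resulting $\mathcal C_K$-contributions (with $(Ax)^{1/2}$-type growth when $\alpha<\tfrac12$) are absorbed using $\log A\gg\mathcal C_K d/\rho_K$; similarly, the inequality $2^d X_0^{d-1}\le A$ needed to make the elementary range reach $X_0$ has to be verified against the precise threshold $X_0$ (which depends on $\mathcal C_K/\rho_K$ and $1/(1-\alpha)$). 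These checks appear to go through, but they are considerably more delicate than what the paper's proof demands, and you are right to flag the calibration as the obstacle. One stylistic gain of your approach is that it isolates exactly where the $+d$ in \eqref{abound} is used ($A\ge 2^d$ and the $\Sigma_2''$ absorption), whereas in the paper it enters more quietly through the $O(d)$ from prime powers in the Mertens lemma.
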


\begin{remark}
It is common notation that $f(x) \ll g(x)$ indicates that $|f(x)| \leq Cg(x)$ for particular values of $x$ for some independent constant $C$. Throughout this work, however, we will say $f \ll g$ if  there exists some constant $C$, which can be chosen independently of any of the variables or invariants that may define $f$ and $g$, such that $|f| \leq C|g|$, unless otherwise specified. We say that $f \ll_d g$ if $C$ has some dependence on $d$, and similarly for other variables. Our big-O notation reflects this as well. For example, we say $f = g +O_d(x)$ if $(f-g) \ll_d x$.  
\end{remark}

In 1972, by effectivizing Landau's theorem, Sunley \cite{Sunley} was able to derive a completely effective bound for $|f_1(x,K)|$. 

\begin{theorem}[Sunley \cite{Sunley}] For $f_1(x,K)$ as in \eqref{eff1}, we have that
\beq
|f_1(x,K)| < e^{56d+5}(d+1)^{5(d+1)/2}|\Delta_K|^{1/(d+1)}\left( \log^d |\Delta_K| \right)x^{(d-1)/(d+1)}. 
\eeq
\end{theorem}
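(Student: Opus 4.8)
The plan is to effectivize the geometry‑of‑numbers proof of Landau's asymptotic $\sum_{n\le x}c_K(n)=\rho_K x+O\big(x^{(d-1)/(d+1)}\big)$, tracking every constant and its dependence on $d$ and $|\Delta_K|$. First I would split the ideal sum by ideal class, $\sum_{n\le x}c_K(n)=\sum_{\mathcal{C}\in\operatorname{Cl}(\mc{O}_K)}N_{\mathcal{C}}(x)$, where $N_{\mathcal{C}}(x)$ counts integral ideals in the class $\mathcal{C}$ with norm $\le x$. Fixing in each inverse class an integral representative $\mf{c}$ with $N(\mf{c})$ below the Minkowski bound $M_K\ll_d|\Delta_K|^{1/2}$, multiplication by $\mf{c}$ identifies $N_{\mathcal{C}}(x)$ with the number of principal ideals $(\gamma)\subseteq\mf{c}$ having $|N(\gamma)|\le xN(\mf{c})$, i.e. the number of nonzero $\gamma\in\mf{c}$ with $|N(\gamma)|\le xN(\mf{c})$ counted modulo units. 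Passing to the Minkowski embedding $\sigma\colon K\hookrightarrow K\otimes\R\cong\R^d$, so that $\sigma(\mf{c})$ is a lattice of covolume $\asymp_d N(\mf{c})|\Delta_K|^{1/2}$, and using the Dirichlet unit theorem to construct an explicit cone‑shaped fundamental domain $\mc{F}$ for the unit action (being careful with the $w_K$ roots of unity), one obtains $N_{\mathcal{C}}(x)=\#\big(\sigma(\mf{c})\cap (xN(\mf{c}))^{1/d}D\big)$, where $D=\mc{F}\cap\{v:|N(v)|\le 1\}$ is a bounded body with piecewise‑smooth boundary, bounded because the norm‑one hypersurface modulo units is compact.

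The leading term of the lattice‑point count is $\operatorname{vol}(D)\,xN(\mf{c})/\operatorname{covol}(\sigma(\mf{c}))$, and summing over the $h_K$ classes and computing $\operatorname{vol}(D)$ in polar‑type coordinates adapted to $\mc{F}$ recovers exactly $\rho_K x$ — this is the volume computation behind the class‑number formula, so it needs no separate input. For the error I would apply an effective Hlawka‑type estimate for the number of lattice points in a homogeneously expanding body whose boundary consists of finitely many pieces, each either flat (the walls of the cone $\mc{F}$) or of nonvanishing Gaussian curvature (the norm‑one surface, whose second fundamental form one checks is generically nondegenerate): for such $B$ and a lattice $\Lambda$, $\big|\#(\Lambda\cap RB)-\operatorname{vol}(B)R^d/\operatorname{covol}(\Lambda)\big|\ll_d R^{\,d-2+\frac{2}{d+1}}$ times a constant depending on the geometry of $B$ and the shape of $\Lambda$. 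Normalizing $\sigma(\mf{c})$ to $\Z^d$ absorbs its covolume into the body and turns the effective radius into $R'\asymp\big(x/|\Delta_K|^{1/2}\big)^{1/d}$ (using $N(\mf{c})\le M_K\ll_d|\Delta_K|^{1/2}$ and the elementary lower bound $\lambda_1(\sigma(\mf{c}))\gg_d N(\mf{c})^{1/d}$ from AM–GM on the conjugates), so each class contributes $\ll_{d,K}x^{\frac{d-1}{d+1}}|\Delta_K|^{-\frac{d-1}{2(d+1)}}$. Summing over the $h_K$ classes and inserting the effective bound $h_K R_K\ll_d|\Delta_K|^{1/2}(\log|\Delta_K|)^{d-1}$ (a Minkowski‑bound plus crude ideal‑count estimate, equivalently Landau's bound $\rho_K\ll_d(\log|\Delta_K|)^{d-1}$ for the residue) collapses all $|\Delta_K|$‑dependence to $|\Delta_K|^{1/(d+1)}(\log|\Delta_K|)^{d}$, with one logarithm to spare for losses in the lattice‑point step; the range of small $x$ is dealt with trivially.

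The main obstacle is making the curved‑boundary lattice‑point estimate genuinely effective: one must verify the nondegeneracy of the curvature of the norm hypersurface with \emph{uniform} constants, and, harder, control how the linear distortions by $\sigma(\mf{c})$ and by a Minkowski‑reduced basis of the unit lattice $\ell(\mc{O}_K^\times)$ enter the implied constant — for it is precisely here that the factors $(d+1)^{5(d+1)/2}$ and $e^{56d+5}$ are manufactured, from Stirling's approximation applied to the Minkowski bound and to the reduction of the unit lattice, together with the unavoidably large $d$‑dependent constant in the Hlawka‑type bound. A secondary, purely bookkeeping, obstacle is checking that all of these pieces — Minkowski bound, unit‑lattice reduction, the $\lambda_1$ lower bound, the bound on $h_K R_K$, and the volume of $D$ — combine to produce exactly the shape $e^{56d+5}(d+1)^{5(d+1)/2}|\Delta_K|^{1/(d+1)}(\log|\Delta_K|)^{d}\,x^{(d-1)/(d+1)}$ and nothing worse. (One could instead run the analogous analytic argument via a truncated Perron formula for $\sum_{n\le x}c_K(n)$ with the functional equation and convexity bounds for $\zeta_K$, but the sharp exponent $\frac{d-1}{d+1}$ still requires the same ``$d$‑dimensional divisor problem'' input beyond raw convexity, so the geometry‑of‑numbers route — Landau's — is the natural one to effectivize.)
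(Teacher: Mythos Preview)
The paper does not prove this statement at all: it is quoted as a result of Sunley \cite{Sunley} and used as a black box to feed into Theorem~\ref{bigt} and obtain Corollary~\ref{sun}. The only information the paper offers about its proof is the single clause ``by effectivizing Landau's theorem,'' so there is no argument here against which to check your proposal.

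That said, your sketch is broadly in the right spirit relative to that one clause --- you are indeed outlining an effectivization of the classical Landau geometry-of-numbers proof of $\sum_{n\le x}c_K(n)=\rho_Kx+O_K(x^{(d-1)/(d+1)})$, splitting by ideal class, passing to a lattice-point count in a fundamental domain for the unit action, and tracking the constants through Minkowski's bound and a bound on $h_KR_K$. Whether the specific numerical constants $e^{56d+5}$ and $(d+1)^{5(d+1)/2}$ actually emerge from the ingredients you list is not something the paper lets one verify; for that one has to consult Sunley's thesis directly. Your honest flagging of the main obstacle --- making the curved-boundary lattice-point estimate effective with uniform control under the distortions coming from $\sigma(\mf{c})$ and the unit lattice --- is exactly where the real work lies, and the paper gives no help there.
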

Combining this with Theorem \ref{bigt}, and employing a theorem due to Friedman \cite{F} which gives us that
\beq
\frac{R_K}{w_K} \geq \frac{9}{100},
\eeq
for all number fields $K$, we are able to produce the following corollary as a proof of concept.
\begin{cor}\label{sun} Let $K$ be a number field with Bertrand constant $B_K$, then
\beq \log B_K \ll \frac{e^{\frac{5}{2}(d+5)\log(d+1)+(56-\log 2)d+5}}{h_K}|\Delta_K|^{\frac{1}{2}+\frac{1}{d+1}} \log^d |\Delta_K|+d 
\eeq
where the implied constant is absolute. 
\end{cor}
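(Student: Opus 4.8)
The plan is to substitute Sunley's explicit estimate for $|f_1(x,K)|$ into Theorem~\ref{bigt} and then bound the residue $\rho_K$ from below using Friedman's inequality. Since $K\neq\mathbb{Q}$ we have $d\geq 2$, so Sunley's theorem provides an estimate of the form \eqref{generaleff1}, valid for all $x\geq 1$, with
\[
\alpha=\frac{d-1}{d+1}\in\Big[\tfrac{1}{3},1\Big),\qquad
\mc{C}_K=e^{56d+5}(d+1)^{5(d+1)/2}|\Delta_K|^{1/(d+1)}\log^d|\Delta_K|.
\]
The hypothesis in Theorem~\ref{bigt} that $\alpha$ be ``fixed'' means only that it does not depend on $x$; the dependence of \eqref{abound} on $\alpha$ is made explicit through the factor $(1-\alpha)^{-1}$, so we are free to take this $d$-dependent value. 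As $1-\alpha=\frac{2}{d+1}$, Theorem~\ref{bigt} yields a prime ideal of norm in $[x,Ax]$ for every $x>1$ as soon as $\log A\gg \frac{\mc{C}_K}{\rho_K}\cdot\frac{(d+2)(d+1)}{2}+d$, and hence, by the definition of $B_K$,
\[
\log B_K\ \ll\ \frac{\mc{C}_K}{\rho_K}\cdot\frac{(d+2)(d+1)}{2}+d.
\]

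Next I would bound $\rho_K$ from below. Using $r_1+2r_2=d$ we get $2^{r_1}(2\pi)^{r_2}=2^{d}(\pi/2)^{r_2}\geq 2^{d}$, and Friedman's theorem \cite{F} gives $R_K/w_K\geq 9/100$, so that
\[
\rho_K=\frac{2^{r_1}(2\pi)^{r_2}h_KR_K}{w_K\sqrt{|\Delta_K|}}\ \geq\ \frac{9}{100}\cdot\frac{2^{d}h_K}{\sqrt{|\Delta_K|}}.
\]
Substituting this together with the value of $\mc{C}_K$, the factor $\sqrt{|\Delta_K|}$ coming from $\rho_K^{-1}$ combines with $|\Delta_K|^{1/(d+1)}$ to give $|\Delta_K|^{1/2+1/(d+1)}$, the combination $e^{56d}/2^{d}$ becomes $e^{(56-\log 2)d}$, and $(d+1)^{5(d+1)/2}=\exp\!\big(\tfrac{5}{2}(d+1)\log(d+1)\big)$, which leads to
\[
\log B_K\ \ll\ \frac{(d+2)(d+1)}{2}\cdot\frac{100}{9}\cdot\frac{e^{\frac{5}{2}(d+1)\log(d+1)+(56-\log 2)d+5}}{h_K}\,|\Delta_K|^{\frac{1}{2}+\frac{1}{d+1}}\log^d|\Delta_K|+d.
\]

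Finally I would absorb the leftover polynomial-in-$d$ prefactor into the exponent. Since $\frac{(d+2)(d+1)}{2}\cdot\frac{100}{9}\leq 12(d+1)^2\leq 12(d+1)^{10}=12\,e^{10\log(d+1)}$ for all $d\geq 0$, and $\tfrac{5}{2}(d+5)\log(d+1)=\tfrac{5}{2}(d+1)\log(d+1)+10\log(d+1)$, the exponent $\tfrac{5}{2}(d+5)\log(d+1)$ accommodates this prefactor up to the absolute constant $12$, which, together with the absolute constant hidden in \eqref{abound}, is swallowed by the implied constant in $\ll$; this yields the stated bound with an absolute implied constant. The one delicate point in the argument is precisely this final bookkeeping: one must verify, uniformly in $d\geq 2$, that replacing $\tfrac{5}{2}(d+1)\log(d+1)$ by $\tfrac{5}{2}(d+5)\log(d+1)$ in the exponent truly absorbs the accumulated polynomial factors $(d+2)$ and $(1-\alpha)^{-1}\sim\frac{d+1}{2}$ produced by \eqref{abound} and Sunley's estimate against the $(d+1)^{5(d+1)/2}$ already present; everything else is a direct substitution.
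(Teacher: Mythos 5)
Your proposal is correct and follows exactly the route the paper intends (the paper only sketches this corollary): substitute Sunley's bound into Theorem~\ref{bigt} with $\alpha=\tfrac{d-1}{d+1}$ and $\mc{C}_K$ read off from Sunley, bound $\rho_K$ from below via $2^{r_1}(2\pi)^{r_2}\geq 2^d$ and Friedman's $R_K/w_K\geq 9/100$, and absorb the resulting polynomial-in-$d$ prefactors into the passage from $\tfrac{5}{2}(d+1)\log(d+1)$ to $\tfrac{5}{2}(d+5)\log(d+1)$. Your bookkeeping at the final step is sound, and your observation that the ``fixed $\alpha$'' hypothesis of Theorem~\ref{bigt} means only independence from $x$ (so that a $d$-dependent $\alpha$ is admissible, with the $\alpha$-dependence tracked through the explicit $(1-\alpha)^{-1}$ factor) is the one point worth being careful about, and you handle it correctly.
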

This result is significantly worse than Corollary~\ref{goodcor2}, but if it has an advantage it is that the absolute constant is significantly easier to compute from the proof of Theorem~\ref{bigt}. Indeed, we can do better than Sunley if we restrict our attention to just growth in the $|\Delta_K|$ aspect rather than attempting a hybrid bound in all the invariants of $K$.

With this in mind, we consider the following proposition.
\begin{prop} \label{coreprop} For all $x\geq 1$ and small $\delta >0 $ such that $\frac{1}{3d} > \delta$ we have that
\begin{align} \label{eff1bound}
& \sum_{n \leq x} c_K(n) =\rho_K x+O_{d,\delta}\left( (\zeta_K(1+\tfrac{\delta}{2})|\Delta_K|^\delta +1)x^{1-\frac{\delta}{2}}   \right).
\end{align}
\end{prop}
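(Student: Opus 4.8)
The plan is to estimate the summatory function of $c_K(n)$ by a Perron-type contour integral against $\zeta_K(s)$, exploiting the meromorphic continuation, the functional equation, and polynomial growth of $\zeta_K(s)$ in vertical strips to shift the contour past $s=1$. First I would write, for a parameter $T>0$,
\beq
\sum_{n\leq x} c_K(n) = \frac{1}{2\pi i}\int_{\sigma_0 - iT}^{\sigma_0 + iT} \zeta_K(s)\,\frac{x^s}{s}\,\ds + (\text{truncation error}),
\eeq
with $\sigma_0 = 1 + \tfrac{\delta}{2}$, where the truncation error is controlled in the usual way by $\sum_n c_K(n)\min(1, x/(n|\log(x/n)|))$ together with the divisor-type bound $c_K(n) \ll_{d,\vep} n^\vep$ and the convergence of $\zeta_K(\sigma_0)$. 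Then I would move the line of integration to $\re s = 1 - \delta$ (note $\delta < \tfrac{1}{3d}$ keeps us in a region where the needed estimates are uniform), picking up exactly the simple pole of $\zeta_K(s)$ at $s=1$, whose residue contributes $\rho_K x$. What remains is to bound the shifted integral $\int_{(1-\delta)} \zeta_K(s)\,x^s/s\,\ds$ and the two horizontal segments at height $\pm T$.

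The key input for the shifted integral is a convexity-type bound for $\zeta_K(s)$ just to the left of the critical strip, uniform in the discriminant. Using the functional equation relating $\zeta_K(s)$ to $\zeta_K(1-s)$ with gamma factors and the conductor $|\Delta_K|$, together with the absolute convergence bound $|\zeta_K(1+\tfrac{\delta}{2})|$ on the reflected line and Stirling for the archimedean factors, one obtains on $\re s = 1-\delta$ a bound of the shape $|\zeta_K(s)| \ll_{d,\delta} \zeta_K(1+\tfrac{\delta}{2})\,|\Delta_K|^{\delta}\,(1+|t|)^{C d \delta}$ (the $|\Delta_K|^{\delta}$ coming from $(|\Delta_K|)^{(1-(1-\delta))/2+\cdots}$-type factors in the functional equation, and the polynomial-in-$t$ growth from the $d$ gamma factors). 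Integrating $x^{1-\delta}(1+|t|)^{Cd\delta}/(1+|t|)$ against $\dt$ over $|t|\leq T$ and choosing $T$ a suitable fixed power of $x$ (or simply $T = x$, absorbing logarithms into the $O_{d,\delta}$) yields a contribution $O_{d,\delta}\big((\zeta_K(1+\tfrac{\delta}{2})|\Delta_K|^{\delta}+1)\,x^{1-\delta}\big)$; since $\delta$ can be replaced by $\delta/2$ throughout (rename the shift parameter), this matches the claimed $x^{1-\delta/2}$ with the stated dependence, the ``$+1$'' accounting for the case where the main term's companion is negligible. The horizontal segments are handled the same way and are smaller by a factor of $T^{-1}$ times a power of $T$, hence absorbed.

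The main obstacle I anticipate is making the growth estimate for $\zeta_K(s)$ in the strip genuinely \emph{uniform} in both $d$ and $|\Delta_K|$ with the precise exponent $\delta$ on $|\Delta_K|$: naive convexity via Phragm\'en--Lindel\"of between $\re s = 1+\vep$ and $\re s = -\vep$ gives $|\Delta_K|$ to a power proportional to the distance past the edge of the critical strip, and one must check that the constraint $\delta < \tfrac{1}{3d}$ is exactly what is needed to keep the archimedean ($t$-aspect) and conductor ($|\Delta_K|$-aspect) contributions balanced and the implied constants finite, as well as to ensure the tail $\zeta_K(1+\tfrac{\delta}{2})$ is the correct normalizing quantity rather than something like $\zeta_K(1+\tfrac{1}{d})$. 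Everything else — Perron truncation, residue computation, bookkeeping of the horizontal pieces — is routine.
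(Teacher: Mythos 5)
Your plan follows essentially the same route as the paper: Perron's formula, a contour shift to $\Re s = 1-\delta$ picking up the residue $\rho_K x$ at $s=1$, and a Phragm\'en--Lindel\"of convexity bound for $\zeta_K$ in the strip that makes the $|\Delta_K|$- and $t$-aspect dependence explicit (the paper quotes Rademacher's theorem for exactly this estimate, rather than rederiving it from the functional equation).

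One point is worth correcting. The parenthetical suggestion that ``simply $T = x$'' works is false for $d \geq 2$: the vertical integral on $\Re s = 1-\delta$ picks up a factor $(1+T)^{d\delta}$ from the $d$ gamma factors in the convexity bound, so with $T = x$ one gets a contribution $\asymp x^{1-\delta} x^{d\delta}$, which already exceeds $x$ for $d \geq 2$. One must balance $x^{1-\delta}(1+T)^{d\delta}$ against the Perron truncation and horizontal-segment errors $\asymp x^{1+\delta}/T$, which forces $1+T \asymp x^{2\delta/(1+d\delta)}$ (the paper's choice), and the requirement that the resulting exponent $1-\delta + 2d\delta^2/(1+d\delta)$ be at most $1-\tfrac{\delta}{2}$ is precisely what produces the hypothesis $\delta < \tfrac{1}{3d}$. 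That constraint is therefore doing real work in the optimization, not merely ensuring ``uniformity of estimates'' as your proposal frames it; the rest of your argument (the shape of the convexity bound, the residue computation, the bookkeeping of horizontal pieces) is sound and matches the paper.
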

\noindent This asymptotic is obtained almost directly from the work of Kuo and Murty \cite{KM}, albeit in such a way that the contribution from the discriminant is mitigated at the expense of growth in $x$. The proof of it can be found in the appendix of this paper. While we know that $\zeta_K(1+\frac{\delta}{2}) \leq \zeta^d(1+\frac{\delta}{2})$ and so $\zeta_K(1+\frac{\delta}{2})$ can be absorbed into the implied constant, we also know that 
\beq
\lim_{\delta \to 0} \frac{\delta}{2} \zeta_K
\left(1+\frac{\delta}{2}\right) \to \rho_K,
\eeq
 though it is not obvious how small $\delta$ needs to be relative to the invariants of $K$ for this to be a good approximation. Still, we heuristically expect $\zeta_K(1+\frac{\delta}{2})$ to cancel the $\rho_K$ in \eqref{abound} in exchange for a large contribution due to $\delta$, so we keep track of it. The obstacle to understanding the bound in the $\delta$ and $d$ aspects is finding a good effective estimate for the implied constants for the bound $c_K(n) \ll_{\delta,d} n^\delta$. 

Inputting \eqref{eff1bound} into \eqref{abound}, and again employing a theorem due to Friedman \cite{F}, we are able to produce the following corollary.

\begin{cor}\label{maint}
Let $K$ be a number field with Bertrand Constant $B_K$, then
\beq \label{poor} 
B_K \leq \exp\left(M_{d,\delta}\left(\frac{\zeta_K(1+\frac{\delta}{2})|\Delta_K|^\delta}{\rho_K}+ \frac{|\Delta_K|^{\frac{1}{2}}}{h_K} \right)\right),
\eeq
for some constant $M_{\delta,d}>0$ dependent on $d$ and $\delta$ where $\frac{1}{3d} > \delta >0$. 
\end{cor}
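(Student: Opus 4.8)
The plan is to feed the asymptotic of Proposition~\ref{coreprop} into Theorem~\ref{bigt} and then to tidy the resulting bound on $\log A$ using the class number formula for $\rho_K$ together with Friedman's inequality. First I would cast \eqref{eff1bound} into the form required by the hypothesis \eqref{generaleff1}: since the estimate in Proposition~\ref{coreprop} holds for every $x \ge 1$ with an implied constant depending only on $d$ and $\delta$ (and not on $K$), there is a $C_{d,\delta}>0$ such that
\[
|f_1(x,K)| \ \le\ C_{d,\delta}\bigl(\zeta_K(1+\tfrac{\delta}{2})|\Delta_K|^\delta+1\bigr)\,x^{1-\delta/2}
\qquad\text{for all }x\ge 1 .
\]
Thus Theorem~\ref{bigt} applies with $\alpha=1-\tfrac{\delta}{2}$, which lies in $(0,1)$ because $0<\delta<\tfrac{1}{3d}<1$, and with $\mc{C}_K=C_{d,\delta}\bigl(\zeta_K(1+\tfrac{\delta}{2})|\Delta_K|^\delta+1\bigr)$.

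Next I would substitute these choices into \eqref{abound}. Since $\tfrac{1}{1-\alpha}=\tfrac{2}{\delta}$, the factor $\tfrac{d+2}{1-\alpha}=\tfrac{2(d+2)}{\delta}$ depends only on $d$ and $\delta$, so Theorem~\ref{bigt} yields: for every $x>1$ there is a prime ideal $\mf{p}\subseteq\mc{O}_K$ with $N(\mf{p})\in[x,Ax]$ as soon as
\[
\log A \ \gg_{d,\delta}\ \frac{\zeta_K(1+\tfrac{\delta}{2})|\Delta_K|^\delta}{\rho_K}+\frac{1}{\rho_K}+d .
\]
Taking $A$ as small as this permits and recalling the definition of $B_K$ gives $\log B_K \ll_{d,\delta} \frac{\zeta_K(1+\delta/2)|\Delta_K|^\delta}{\rho_K}+\frac{1}{\rho_K}+d$.

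It then remains to absorb the two lower-order terms into the shape of \eqref{poor}. For the term $1/\rho_K$ I would insert $\rho_K=\frac{2^{r_1}(2\pi)^{r_2}h_KR_K}{w_K\sqrt{|\Delta_K|}}$, use $2^{r_1}(2\pi)^{r_2}\ge 1$, and apply Friedman's bound $R_K/w_K\ge\tfrac{9}{100}$ to obtain $\frac{1}{\rho_K}\le\frac{100}{9}\cdot\frac{|\Delta_K|^{1/2}}{h_K}$, so that $\frac{1}{\rho_K}\ll\frac{|\Delta_K|^{1/2}}{h_K}$. For the additive $d$, I would note that for fixed $d$ and $\delta$ the leading term satisfies $\frac{\zeta_K(1+\delta/2)|\Delta_K|^\delta}{\rho_K}\ge\frac{|\Delta_K|^\delta}{\rho_K}\to\infty$ as $|\Delta_K|\to\infty$ (using a standard upper bound for $\rho_K$ that is polynomial in $\log|\Delta_K|$ for fixed degree), so it exceeds $d$ once $|\Delta_K|$ is larger than a threshold depending only on $d$ and $\delta$; the finitely many number fields of degree $d$ with discriminant below that threshold each have a finite $B_K$ and are absorbed by enlarging the constant $M_{d,\delta}$. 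Combining these observations gives $\log B_K \ll_{d,\delta} \frac{\zeta_K(1+\delta/2)|\Delta_K|^\delta}{\rho_K}+\frac{|\Delta_K|^{1/2}}{h_K}$, which is \eqref{poor}.

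The main obstacle here is not conceptual but one of bookkeeping: one has to keep scrupulous track of the fact that every implied constant depends only on $d$ and $\delta$ and never on the remaining invariants of $K$, in particular in the passage from the $O_{d,\delta}$-estimate of Proposition~\ref{coreprop} to the uniform inequality \eqref{generaleff1}, and one must check that the stray quantities $1/\rho_K$ and $d$ are genuinely dominated by the two terms displayed in \eqref{poor} — this last point being exactly where Friedman's inequality and, for fields of small discriminant, a finite-search argument are needed.
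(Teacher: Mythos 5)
Your proposal is correct and follows the same route the paper sketches in the sentence preceding the corollary (``Inputting \eqref{eff1bound} into \eqref{abound}, and again employing a theorem due to Friedman''): you cast Proposition~\ref{coreprop} as the hypothesis \eqref{generaleff1} with $\alpha = 1 - \delta/2$, substitute into \eqref{abound}, and use Friedman to convert $1/\rho_K$ into $|\Delta_K|^{1/2}/h_K$. The one place you go beyond the paper's terse wording is the absorption of the stray additive $d$ into the $M_{d,\delta}$-multiple of the bracketed sum, which you justify by showing the leading term $\zeta_K(1+\delta/2)|\Delta_K|^\delta/\rho_K$ tends to infinity with $|\Delta_K|$ (using $\zeta_K(1+\delta/2)\ge 1$ and a polynomial-in-$\log|\Delta_K|$ upper bound on $\rho_K$) and invoking Hermite's finiteness theorem for the small-discriminant exceptions; the paper leaves this step implicit, and it is indeed what is needed, since the bracketed sum has no $K$-independent positive lower bound without such an argument.
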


The size of $ \frac{|\Delta_K|^{\frac{1}{2}}}{h_K}$ depends on the existence of a Siegel zero, but in the case of totally complex number fields is heuristically likely to grow like $\log |\Delta_K|$.  If we could indeed let $\zeta_K(1+\delta)/\rho_K =O_\delta(1)$ then we would be left with the $|\Delta_K|^\delta$ term as a main term. So if we hope to match the results in Theorem \ref{maint} in the $\Delta_K$ aspect in any case, we would need to let $\delta = 2\log(\log |\Delta_K|)/\log(|\Delta_K|)$, but then we would be limited by the lack of effectiveness in the $\delta$ aspect. 

Though the results of Theorem~\ref{bigt} and our current bounds for $|f_1(x,K)|$ are apparently worse than those that can be obtained from careful analysis of the effective prime ideal theorem, one would hope that they might be put to better use in certain special cases, such as for quadratic fields. This has some overlap with the older problem of Bertrand's postulate for primes in arithmetic progressions, and may be an avenue for further research. A thorough treatment for this topic can be found by Moree \cite{Moree}.

\section{Bertrand's Postulate from the Prime Ideal Theorem}\label{pidb}

In this section we prove Theorem \ref{goodcor}. 

\begin{proof}[Proof of Theorem \ref{goodcor}] 
First suppose that no exceptional zero exists. By Theorem \ref{ell1} we only have to show that for any $A>1$ there exists $c_A$, independent of $|\Delta_K|$ and $d$, such that
\beq
\pi(Ax)-\pi(x)  >\Li(Ax)-\Li(x) -2c_1 Ax \exp\left(-c_2\sqrt{\frac{\log x}{d}}\right)>0
\eeq
for $x \geq \exp(c_A d(\log |\Delta_K|)^2)$. This is also the case if an exceptional zero exists and $\epsilon_K=1$ since $\Li(x)$ is an increasing function for $x>2$. From partial integration we have that
\beq
\Li(Ax)-\Li(x) = \frac{Ax}{\log Ax}-\frac{x}{\log x}+ \int_{x}^{Ax} \frac{dt}{(\log t)^2},
\eeq
so it suffices to show that
\beq \label{errorterm}
A\left(\frac{ \log x}{\log A x}\right) >1+ 2A c_1 (\log x) \exp\left(-c_2 \sqrt{\frac{\log x}{d}}\right).
\eeq
Since only $A$ determines how large $x$ needs to be for $A\log x / \log Ax$ to be larger than one, we need only show that we can make the exponential term sufficiently small by controlling the size of $c_A$ independently of $d$ and $\Delta_K$. Indeed, let $x = \exp(c_A d(\log |\Delta_K|)^2)$, then we see that,
\beq
 2Ac_1(\log x) \exp\left(-c_2 \sqrt{\frac{\log x}{d}}\right) = 2Ac_1 (c_A d (\log |\Delta_K|)^2)|\Delta_K|^{-c_2\sqrt{c_A}}. \label{ez1}
\eeq   
 Taking advantage of Minkowski's bound and Stirling's approximation we can say that 
\begin{equation} 
d|\Delta_K|^{-1} \leq \left( \frac{4}{\pi}\right)^{d} \frac{(d!)^2}{d^{2d-1}} \sim 2\pi \left(\frac{4}{\pi e^2} \right)^d, \label{mink}
\end{equation} 
as $d$ gets large. Thus the exponential term in \eqref{ez1} will decrease as $c_A$ increases past a point that can be chosen independently of $K$, and so we can say that for $x > \exp(c_A d(\log |\Delta_K|)^2)$, 
\beq
(\log x) \exp\left(-c_2 \sqrt{\frac{\log x}{d}}\right) \leq  (c_A d (\log |\Delta_K|)^2)|\Delta_K|^{-c_2\sqrt{c_A}}. \label{ez11}
\eeq   
We see the upper bound can be made uniform in $K$ and also vanishes as $c_A \to \infty$, giving the proposition in this case.

 For the case of the General Riemann Hypothesis, we follow the same reasoning but replace \eqref{GRH} with \eqref{ello}. So when 
 \[
 x>c_A (\log|\Delta_K|+d)^2\log^4( \log |\Delta_K|+ d),
 \]
   it suffices to observe, via substitution, that the term
 \begin{align}
\left(\frac{\log x}{x} \right) & x^{\frac{1}{2}} \log(|\Delta_K| x^d) 
 \end{align}
 can be made arbitrarily small by increasing $c_A$ independently of $K$.

Now suppose an exceptional zero exists with $\beta > 1-(4 \log |\Delta|_K)^{-1}$ and $\epsilon_K =0$. Then we need to show that
\beq
\Li(Ax)-\Li(x)-\left(\Li((Ax)^\beta)-\Li(x^\beta)\right) -2c_1 Ax \exp\left(-c_2\sqrt{\frac{\log x}{d}}\right)>0,
\label{partial3} \eeq
for $x > \exp(c_A d(\log |\Delta_K|)^2)$. We have from Stark \cite{stark} that, if $K$ is a normal extension, we can assume 
\beq \label{starkrange}
1-(4 \log |\Delta|_K)^{-1}<\beta<1-c_4|\Delta_K|^{-1/d},
\eeq
if $\beta$ exists, for some effectively computable positive constant $c_4$. Differentiation shows us that $\Li((Ax)^\beta)-\Li(x^\beta)$ increases as $\beta$ increases, so we can just let $\beta = 1-c_4|\Delta_K|^{-1/d}$.

Thus we can deduce from \eqref{partial3} and partial integration that, for $A>1$ and $x> \exp(10d(\log |\Delta_K|)^2)$, we have $\pi_K(Ax)>\pi_K(x)$ if 
\begin{align} \label{effective1}
\frac{\beta Ax-(Ax)^\beta}{\beta \log Ax} + \int_{(Ax)^\beta}^{Ax} \frac{dt}{(\log t)^2} >& \frac{\beta x-x^\beta}{\beta \log x}  + \int_{x^\beta}^x \frac{dt}{(\log t)^2}  \\
&\notag + 2c_1Ax \exp\left(-c_2 \sqrt{\frac{\log x}{d}}\right).
\end{align}
Taking derivatives, for fixed $\beta$ we see that $\int_{x^\beta}^x \frac{dt}{(\log t)^2}$ is increasing in $x$ for $x>4$ when $\beta>\frac{1}{2}$. Thus we can drop the integral terms from both sides of \eqref{effective1} to get an inequality that still yields $\pi_K(Ax)>\pi_K(x)$. Rewriting this, we get

\begin{align}
\left(\frac{\beta-(Ax)^{\beta-1}}{\beta -x^{\beta-1}} \right) \frac{\log x}{\log Ax}>& \frac{1}{A}+\frac{2\beta c_1 \log x}{\beta - x^{\beta-1}}\exp\left(-c_2\sqrt{\frac{\log x}{d}}\right).
\end{align}
We see the left hand side is bigger than $\frac{\log x}{\log Ax}$ for any $A>1$, and furthermore $\frac{\log x}{\log Ax} - \frac{1}{A}>0$ as $x$ gets large. Thus again we just need to show that there exists $c_A$, independent of $d$ and $|\Delta_K|$, such that
 when $x>\exp(c_Ad (\log |\Delta_K|)^2)$ we can make the remaining exponential term arbitrarily small.  Let $c_A>10$ and $c_2\sqrt{c_A}>\frac{5}{2}$. If $x>\exp(c_A d (\log |\Delta_K|)^2)$, we can choose $c_A$ to be large enough independently of $d$ and $|\Delta_K|$ such that , 
\begin{align} \label{bound1}
\frac{2\beta c_1 \log x}{\beta - x^{\beta-1}}\exp\left(-c_2\sqrt{\frac{\log x}{d}}\right)  
 < \frac{2\beta c_1 c_Ad(\log |\Delta_K|)^2|\Delta_K|^{-c_2\sqrt{c_A}} }{\beta-\exp(c_A d(\log \Delta_K|)^2(\beta-1))}.
\end{align}
Letting $\beta =1-c_4 |\Delta_K|^{-\frac{1}{d}}$, we have that
\begin{align}
\frac{2\beta c_1 \log x}{\beta - x^{\beta-1}} e^{-c_2\sqrt{\frac{\log x}{d}}} <
 \left.\frac{2 c_1 c_Ad(\log |\Delta_K|)^2|\Delta_K|^{\frac{1}{d}-c_2\sqrt{c_A}} }{ |\Delta_K|^{1/d}(1-\exp(-c_4c_A d (\log |\Delta_K|)^2|\Delta_K|^{-1/d}))-c_4}\right. . \label{uniform}
\end{align}
It is not difficult to see that the denominator of this upper bound is bounded below and positive for sufficiently large $c_A$, independent of $|\Delta_K|$ and $d$. So in this case we get that
\beq
 \frac{2\beta c_1 \log x}{\beta - x^{\beta-1}} e^{-c_2\sqrt{\frac{\log x}{d}}} \ll c_A d(\log |\Delta_K|)^2|\Delta_K|^{\frac{1}{d}-c_2\sqrt{c_A}}, \label{uniform2}
\eeq
where the implied constant can be made independent of $c_A$, $d$ and $|\Delta_K|$ provided that $c_A$ is sufficiently large. 
 From \eqref{mink} we see that we can make \eqref{uniform2} independent of $|\Delta_K|$ and $d$ provided $c_2\sqrt{c_A}>\frac{5}{2}$, and further we see that this bound goes to zero as $c_A \to \infty$. 

 If the extension is not normal but there exists a tower of fields $\mathbb{Q} \subset K_1 \subset K_2 \subset \cdots \subset K_m = K$, such that $K_{i}$ is normal over $K_{i-1}$, then Stark's bound allows for the possibility that
 \beq
   1-c_4|\Delta_K|^{-1/d} < \beta < 1-(16 \log|\Delta_K|)^{-1}
 \eeq
  if $16c_4 \log|\Delta_K| > |\Delta_K|^{1/d}$. When this occurs,  \eqref{uniform} becomes
 \beq \label{lastineq}
 \frac{2\beta c_1 \log x}{\beta - x^{\beta-1}} e^{-c_2\sqrt{\frac{\log x}{d}}}  < \left.\frac{32 c_1 c_Ad(\log |\Delta_K|)^3|\Delta_K|^{-c_2\sqrt{c_A}} }{ 16(\log|\Delta_K|)  (1-\exp(-\frac{1}{16}c_Ad (\log |\Delta_K|)))-1},\right. 
 \eeq
 and again we see that this bound uniformly goes to zero as $c_A \to \infty$.
 
 Finally, if the extension is not normal, nor does there exist a tower of field extensions as above, then Stark's bound allows for the possibility that
 \beq
 1-c_4|\Delta_K|^{-1/d} < \beta < 1-(4d! \log|\Delta_K|)^{-1},
 \eeq
 if $4c_4 d! \log|\Delta_K| > |\Delta_K|^{1/d}$. When this occurs,  \eqref{uniform} becomes
 \beq \label{lastineq}
 \frac{2\beta c_1 \log x}{\beta - x^{\beta-1}} e^{-c_2\sqrt{\frac{\log x}{d}}}  < \left.\frac{8 c_1 c_Ad(\log |\Delta_K|)^3|\Delta_K|^{-c_2\sqrt{c_A}} d! }{ 4d!(\log|\Delta_K|)  (1-\exp(-\frac{1}{4}c_Ad (\log |\Delta_K|)/d!))-1}\right. . 
 \eeq
 This bound still decays as $c_A \to \infty$, and is uniform in $|\Delta_K|$ but not necessarily in $d$. We see from \eqref{mink} and Stirling's approximation that if we take $x>\exp(c_A d (\log d)^2 (\log |\Delta_K|)^2)$ instead, effectively replacing each occurrence of $c_A$ in \eqref{lastineq} with $c_A (\log d)^2$, this bound can be made uniform in $d$. 
Indeed, it is enough that we take $x>\exp(c_A d(\log d)^{2-2\alpha} (\log |\Delta_K|)^2)$ so long as $\log |\Delta_K| \gg d(\log d)^\alpha$.

\end{proof}

\section{Bertrand's Postulate from Counting Ideals} \label{bpc}
In this section we prove Theorem \ref{bigt}.

\begin{proof}[Proof of Theorem \ref{bigt}.] For an ideal $\mathfrak{a}$ in $\mathcal{O}_K$, let $\Lambda_K(\mathfrak{a}):=\log N(\mathfrak{p}) $ when $\mathfrak{a}=\mathfrak{p}^k$, where $\mathfrak{p}$ henceforth denotes a prime ideal in $\mathcal{O}_K$ above a prime ideal $(p) \subset \mathbb{Z}$, and zero otherwise. This is the natural extension of the von Mangoldt function to $K$, where
\beq
-\frac{\zeta_K'(s)}{\zeta_K(s)} = \sum_{\mathfrak{a} \subseteq \mathcal{O}_K} \frac{\Lambda_K(\mathfrak{a})}{N(\mathfrak{a})^s} = \sum_{n=1}^\infty \frac{\Lambda^\#_K(n)}{n^s}.
\eeq
We similarly define the Chebyshev function for $K$,
\beq
\psi_K(x): = \sum_{N(\mathfrak{a})\leq x} \Lambda_K(\mathfrak{a}) = \sum_{n \leq x} \Lambda_K^\# (n). 
\eeq

\begin{lemma}For $x \geq 1$ 
\beq \label{lemma1}
\sum_{n \leq x} \frac{\Lambda^\#_K(n)}{n} = 
\sum_{N(\mathfrak{p}) \leq x} \frac{\log N(\mathfrak{p})}{N(\mathfrak{p})}
+ O(d),
\eeq
where the above right-hand sum is over norms of prime ideals, $q$. \\

\vspace{2 mm}

\noindent For $1>\alpha>0$,
\beq \label{lemma2}
\sum_{n \leq x} \frac{\Lambda_K^\#(n)}{n^\alpha} \ll d\frac{(x^{1-\alpha}-\alpha)}{1-\alpha}.
\eeq
\end{lemma}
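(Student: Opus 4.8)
The plan is to reduce both identities to elementary facts about rational primes by means of the pointwise comparison $\Lambda_K^\#(n)\le d\,\Lambda(n)$, where $\Lambda$ is the ordinary von Mangoldt function. First I would establish this comparison. Since $\Lambda_K^\#(n)=\sum_{N(\mathfrak{a})=n}\Lambda_K(\mathfrak{a})$ is supported on prime powers, fix $n=p^m$, and for a prime ideal $\mathfrak{p}$ of $\mc{O}_K$ above the rational prime $p$ write $e_\mathfrak{p},f_\mathfrak{p}$ for its ramification index and residue degree, so $N(\mathfrak{p})=p^{f_\mathfrak{p}}$. An ideal $\mathfrak{a}$ with $\Lambda_K(\mathfrak{a})\ne 0$ and $N(\mathfrak{a})=p^m$ must equal $\mathfrak{p}^{m/f_\mathfrak{p}}$ for some $\mathfrak{p}\mid p$ with $f_\mathfrak{p}\mid m$, contributing $\log N(\mathfrak{p})=f_\mathfrak{p}\log p$; hence $\Lambda_K^\#(p^m)=(\log p)\sum_{\mathfrak{p}\mid p,\ f_\mathfrak{p}\mid m}f_\mathfrak{p}$. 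By the fundamental identity $\sum_{\mathfrak{p}\mid p}e_\mathfrak{p}f_\mathfrak{p}=[K:\mathbb{Q}]=d$ and $e_\mathfrak{p}\ge 1$, the inner sum is at most $d$, so $\Lambda_K^\#(p^m)\le d\log p=d\,\Lambda(p^m)$, i.e. $\Lambda_K^\#(n)\le d\,\Lambda(n)$ for all $n$, and in particular $\psi_K(t)\le d\,\psi(t)$.

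For \eqref{lemma1} I would rewrite the left side as a sum over ideals and expand over prime ideals and their powers:
\[
\sum_{n\le x}\frac{\Lambda_K^\#(n)}{n}=\sum_{N(\mathfrak{a})\le x}\frac{\Lambda_K(\mathfrak{a})}{N(\mathfrak{a})}=\sum_{\mathfrak{p}}\sum_{k\ge 1,\ N(\mathfrak{p})^k\le x}\frac{\log N(\mathfrak{p})}{N(\mathfrak{p})^k}.
\]
The $k=1$ terms give exactly $\sum_{N(\mathfrak{p})\le x}\frac{\log N(\mathfrak{p})}{N(\mathfrak{p})}$, the asserted main term. For $k\ge 2$, summing the geometric series and using $N(\mathfrak{p})\ge 2$ (so $N(\mathfrak{p})-1\ge N(\mathfrak{p})/2$) bounds the tail by $2\sum_{\mathfrak{p}}\frac{\log N(\mathfrak{p})}{N(\mathfrak{p})^2}$. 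Grouping the $\mathfrak{p}$ by the rational prime $p$ below them and using $N(\mathfrak{p})=p^{f_\mathfrak{p}}\ge p$ together with $\sum_{\mathfrak{p}\mid p}f_\mathfrak{p}\le d$ from the first step, this is at most $2d\sum_p\frac{\log p}{p^2}$, and since $\sum_p(\log p)/p^2$ converges to an absolute constant the tail is $O(d)$, which is \eqref{lemma1}.

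For \eqref{lemma2} the comparison gives $\sum_{n\le x}\Lambda_K^\#(n)/n^\alpha\le d\sum_{n\le x}\Lambda(n)/n^\alpha$, and I would estimate the right-hand sum by partial summation against $\psi(t)=\sum_{n\le t}\Lambda(n)$ with Chebyshev's bound $\psi(t)\le Ct$, $C$ absolute:
\[
\sum_{n\le x}\frac{\Lambda(n)}{n^\alpha}=\frac{\psi(x)}{x^\alpha}+\alpha\int_1^x\frac{\psi(t)}{t^{\alpha+1}}\,dt\le Cx^{1-\alpha}+C\alpha\,\frac{x^{1-\alpha}-1}{1-\alpha}.
\]
For $x\ge 1$ and $0<\alpha<1$ one has $x^{1-\alpha}\ge 1$, whence $x^{1-\alpha}\le\frac{x^{1-\alpha}-\alpha}{1-\alpha}$ and $\frac{x^{1-\alpha}-1}{1-\alpha}\le\frac{x^{1-\alpha}-\alpha}{1-\alpha}$; absorbing both terms yields $\sum_{n\le x}\Lambda_K^\#(n)/n^\alpha\ll d\,\frac{x^{1-\alpha}-\alpha}{1-\alpha}$.

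I do not expect a real obstacle here: the one substantive input is the fundamental degree identity $\sum_{\mathfrak{p}\mid p}e_\mathfrak{p}f_\mathfrak{p}=d$ (which produces $\Lambda_K^\#\le d\,\Lambda$), together with Chebyshev's elementary estimate for $\psi$ and the convergence of $\sum_p(\log p)/p^2$. The only places calling for care are the bookkeeping of the prime ideals lying over a fixed rational prime in the proof of \eqref{lemma1}, and checking that the elementary inequalities among $x^{1-\alpha}$, $\frac{x^{1-\alpha}-1}{1-\alpha}$ and $\frac{x^{1-\alpha}-\alpha}{1-\alpha}$ point the right way in the proof of \eqref{lemma2}.
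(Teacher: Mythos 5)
Your proof is correct and follows essentially the same route as the paper: the comparison $\Lambda_K^\#(n)\le d\,\Lambda(n)$ via the identity $\sum_{\mathfrak{p}\mid p}e_\mathfrak{p}f_\mathfrak{p}=d$, the expansion of the $k\ge 2$ tail for \eqref{lemma1}, and Abel summation with Chebyshev's bound $\psi(t)\ll t$ for \eqref{lemma2}. The only cosmetic differences are that you apply the $d$-comparison before partial summation rather than after (via $\psi_K\le d\psi$), and that you bound $x^{1-\alpha}+\frac{\alpha}{1-\alpha}(x^{1-\alpha}-1)$ by $\frac{x^{1-\alpha}-\alpha}{1-\alpha}$ through inequalities when in fact it equals that quantity exactly.
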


\begin{proof}[Proof of lemma] Let 
\beq 
\phi(x) := \sum_{n \leq x} \frac{\Lambda^\#_K(n)}{n}
-\sum_{N(\mathfrak{p}) \leq x} \frac{\log N(\mathfrak{p})}{N(\mathfrak{p})}.
\eeq
We see that
\begin{align}
|\phi(x)| \leq & \sum_{N(\mathfrak{p}) \leq x} \sum_{n=2}^\infty \frac{\log N(\mathfrak{p})}{N(\mathfrak{p})^n} \leq \sum_{p \leq x} \sum_{\mathfrak{p} \, \cap \, \mathbb{Z} =(p)}  \sum_{n=2}^\infty  \frac{\log N(\mathfrak{p})}{p^n} \leq \sum_{p \leq x} \sum_{n=2}^\infty \frac{d \log p }{p^n} \notag \\
& \leq d \sum_{p \leq x} \frac{\log p}{p^2-p} \leq d \sum_{m=2}^\infty \frac{2\log m}{m^2} = O(d), 
\end{align}
which gives \eqref{lemma1}. 
To get \eqref{lemma2} we can use Abel's partial summation formula to get that
\beq \label{abel2}
\sum_{n \leq x} \frac{\Lambda^\#_K(n)}{n^\alpha} = \psi_K(x)x^{-\alpha} + \alpha \int_1^x \psi_K(u)u^{-\alpha-1} \ du.
\eeq
Since $\Lambda^\#_K(n) \leq d\Lambda(n)$, where $\Lambda(n):=\Lambda_{\mathbb{Q}}(n)$ is the classical von Mangoldt function, we have that $\psi_K(x) \leq d \psi(x)$
for all $x\geq 1$, where $\psi(x) :=\psi_{\mathbb{Q}}(x)$ is the classical Chebyshev function.  It is easily shown that $\psi(x) \ll x$, so $\psi_K(x) \ll dx$. Putting this into \eqref{abel2} we get that 
\beq 
\sum_{n \leq x} \frac{\Lambda_K^\#(n)}{n^\alpha} \ll d\left( x^{1-\alpha}+\frac{\alpha}{1-\alpha}(x^{1-\alpha}-1)\right)
\eeq
which is a restatement of \eqref{lemma2}.
\end{proof}
We see that, by the unique prime factorization of ideals in the ring of integers of a number field, we have that
\begin{align} \label{bigidea}
\sum_{n \leq x} c_K(n)\log n &= \sum_{N(\mathfrak{a})\leq x} \log N(\mathfrak{a}) = \sum_{N(\mathfrak{a})\leq x} \sum_{\mathfrak{b} | \mathfrak{a}} \Lambda_K(\mathfrak{b}) \notag \\
& = \sum_{N(\mathfrak{b}) \leq x} \Lambda_K(\mathfrak{b}) \sum_{N(\mathfrak{a})\leq \frac{x}{N(\mathfrak{b})}} 1 
= \sum_{n \leq x} \Lambda_K^\#(n) \sum_{m \leq \frac{x}{n}} c_K(m). 
\end{align}
Letting 
\beq \label{eff2}
f_2(x,K) := \left(\sum_{n \leq x} c_K(n)\log n \right) - \rho_K( x\log x -x+1), 
\eeq
we get that \eqref{bigidea} becomes
\beq
\rho_K(x \log x -x+1)+f_2(x,K) = \sum_{n \leq x} \Lambda_K^\#(n) \left(\rho_K\tfrac{x}{n} + f_1(\tfrac{x}{n},K) \right).
\eeq
Thus by \eqref{lemma1} we have
\beq \label{ceecue}
\sum_{N(\mathfrak{p}) \leq x} \frac{\log N(\mathfrak{p})}{N(\mathfrak{p})}= \log x+\frac{1}{x\rho_K}\left( f_2(x,K) -\sum_{n \leq x} \Lambda_K^\#(n)f_1(\tfrac{x}{n},K) \right) +O(d).
\eeq
Now by combining \eqref{generaleff1} and \eqref{lemma2} we have that 
\begin{align} \label{term1}
&\sum_{n\leq x}  \Lambda_K^\#(n)f_1(\tfrac{x}{n},K) \ll d\mc{C}_K\frac{x-\alpha x^{\alpha}}{1-\alpha}.
\end{align}

To bound $f_2(x,K)$ we make use of the following lemma.
\begin{lemma}
If for $0<\alpha<1$ and $x\geq 1$,
\beq 
|f_1(x,K)| \leq \mc{C}_Kx^{\alpha} 
\eeq
for some $\mc{C}_K>0$, then
 \beq \label{eff2bound}
|f_2(x,K)| \leq \mc{C}_Kx^\alpha \left(\log x+\frac{1-x^{-\alpha}}{\alpha}\right).
 \eeq
\end{lemma}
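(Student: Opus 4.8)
The plan is to express $f_2(x,K)$ \emph{exactly} in terms of the function $f_1(\cdot,K)$ by a single application of Abel summation, and then obtain the stated bound by the triangle inequality together with the hypothesis. Write $A(x) := \sum_{n\le x} c_K(n)$, so that by definition \eqref{eff1} we have $A(t) = \rho_K t + f_1(t,K)$. Applying Abel's summation formula with the weight $t \mapsto \log t$ (which is $C^1$ on $[1,x]$ with derivative $1/t$, and vanishes at $t=1$) gives
\[
\sum_{n\le x} c_K(n)\log n \;=\; A(x)\log x \;-\; \int_1^x \frac{A(t)}{t}\,\dt.
\]

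Next I would substitute $A(t) = \rho_K t + f_1(t,K)$ into both terms and isolate the main term. The $\rho_K$-parts collapse neatly: $\rho_K x\log x - \rho_K\int_1^x \dt = \rho_K x\log x - \rho_K(x-1) = \rho_K(x\log x - x + 1)$, which is precisely the quantity subtracted in the definition \eqref{eff2} of $f_2$. Hence
\[
f_2(x,K) \;=\; f_1(x,K)\log x \;-\; \int_1^x \frac{f_1(t,K)}{t}\,\dt.
\]
Now applying the triangle inequality and the hypothesis $|f_1(t,K)| \le \mc{C}_K t^\alpha$ (valid for all $t\ge 1$) yields
\[
|f_2(x,K)| \;\le\; \mc{C}_K x^\alpha\log x \;+\; \mc{C}_K\int_1^x t^{\alpha-1}\,\dt \;=\; \mc{C}_K x^\alpha\log x \;+\; \mc{C}_K\,\frac{x^\alpha - 1}{\alpha},
\]
and factoring out $\mc{C}_K x^\alpha$ gives the claimed estimate \eqref{eff2bound}.

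There is no genuine obstacle here; the argument is entirely mechanical once one sees that Abel summation converts the $\log n$ weight into the integral $\int_1^x f_1(t,K)/t\,\dt$. The only points deserving a little care are the bookkeeping of the constant term (checking that the $+1$ inside $\rho_K(x\log x - x + 1)$ really does emerge as $\rho_K\int_1^x \dt$ and nothing is lost at the lower limit $t=1$), and noting the boundary case $x=1$, where both sides of \eqref{eff2bound} vanish. One could equivalently carry out the computation discretely, summing by parts over the integers $n \le x$, but the integral form is cleaner and makes the cancellation of the $\rho_K$ terms transparent.
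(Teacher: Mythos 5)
Your proof is correct and is essentially identical to the paper's: both apply Abel summation with the weight $\log t$ to express $\sum_{n\le x} c_K(n)\log n$ in terms of $A(t)=\rho_K t + f_1(t,K)$, collapse the $\rho_K$ terms to $\rho_K(x\log x - x + 1)$ so that $f_2(x,K)=f_1(x,K)\log x-\int_1^x f_1(u,K)\,du/u$, and then bound via the triangle inequality. The bookkeeping you flag (the $+1$ from the lower limit, the vanishing at $x=1$) matches the paper's computation exactly.
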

\begin{proof}[Proof of lemma.]
This follows from the bound in \eqref{generaleff1} by another application of Abel's summation formula. Indeed, 
\beq
\sum_{n \leq x} c_K(n) \log n = \left(\sum_{n \leq x} c_K(n)\right)\log x - \int_1^x \left(\sum_{n \leq u}  c_K(n)\right) \frac{du}{u}
\eeq
so we have
\begin{align}
\sum_{n \leq x} c_K(n) \log n &= \left(\rho_Kx+f_1(x,K)\right)\log x  - \int_1^x \left(\rho_Ku+f_1(u,K) \right) \frac{du}{u} \notag \\
&  = \rho_K(x\log x -x+1)+f_1(x,K)\log x - \int_1^x f_1(u,K) \frac{du}{u}
\end{align}
and \eqref{eff2bound} follows. 
\end{proof}

Substituting \eqref{term1} and \eqref{eff2bound} back into \eqref{ceecue} we get
\begin{align}
&\sum_{N(\mathfrak{p}) \leq x} \frac{\log N(\mathfrak{p})}{N(\mathfrak{p})}  \\
&=\log x + O\left(\frac{\mc{C}_K}{\rho_K}\left(\frac{d(1-\alpha x^{\alpha-1})}{1-\alpha}+x^{\alpha-1}\log x +\frac{x^{\alpha-1}-x^{-1}}{\alpha}\right)+d\right). \notag
\end{align}
And so finally, for any $A\geq 1$, 
\beq
\sum_{N(\mathfrak{p}) \leq Ax} \frac{\log N(\mathfrak{p})}{N(\mathfrak{p})} -\sum_{N(\mathfrak{p}) \leq x} \frac{\log N(\mathfrak{p})}{N(\mathfrak{p})} = \log A +O\left(\frac{\mc{C}_K}{\rho_K}\left(\frac{d+2}{1-\alpha}\right)+d\right).
\eeq
Thus, for any $x>1$, a prime ideal, $\mathfrak{p}$, exists in $\mathcal{O}_K$ such that $N(\mathfrak{p}) \in [x,Ax]$, so long as 
\beq 
 \log A \gg \frac{\mc{C}_K}{\rho_K}\left(\frac{d+2}{1-\alpha}\right)+d,
\eeq
which proves Theorem \ref{bigt}.

\end{proof}

\appendix
\section{Ideal Counting}\label{app}

The proof of Proposition \ref{coreprop} proceeds with only subtle variation from the proof of the main theorem due to Kuo and Murty in their relevant work \cite{KM}. Changes are made to account for the presence of the simple pole at $s=1$ and that our goal is to minimize growth in $|\Delta_K|$ aspect, possibly at the expense of growth in the $x$ aspect.

First we use the following result due to Rademacher \cite{R}, arising from the sharper version of the Phragm\'{e}n-Lindel\"{o}f theorem to obtain the convexity bound for $\zeta_K(s)$ in the critical strip. 
\begin{theorem}[Rademacher \cite{R}]\label{lem:con}
For $\sigma,\eta,t \in \mathbb{R}$ such that  $\frac{3}{2}\geq \sigma>1$ and $1-\sigma<\eta <\sigma$, we have that
\beq
\zeta_K(\eta+it) \leq  3 \left(\left|\Delta_K\right|\left(\frac{|1+\eta+it|}{2\pi}\right)^{d}\right)^{\frac{\sigma-\eta}{2}}\frac{|1+\eta+it|}{|\eta-1+it|}\zeta_K(\sigma).
\eeq
\end{theorem}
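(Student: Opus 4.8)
The plan is to obtain this as a convexity bound via the Phragm\'en--Lindel\"of principle, which is the route Rademacher takes. Three ingredients are needed. First, on the line $\Re s = \sigma$ with $\sigma>1$ the Dirichlet series in \eqref{ded} converges absolutely with nonnegative coefficients, so $|\zeta_K(\sigma+it)| \le \zeta_K(\sigma)$ for every $t$. Second, on the reflected line $\Re s = 1-\sigma$ I would use the functional equation: with $\widehat{\zeta}_K(s) := |\Delta_K|^{s/2}\bigl(\pi^{-s/2}\Gamma(s/2)\bigr)^{r_1}\bigl((2\pi)^{-s}\Gamma(s)\bigr)^{r_2}\zeta_K(s)$ one has $\widehat{\zeta}_K(s) = \widehat{\zeta}_K(1-s)$, hence $\zeta_K(s) = X_K(s)\zeta_K(1-s)$ where $X_K(s)$ is the explicit product of $|\Delta_K|^{(1-2s)/2}$ with a ratio of gamma factors. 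Since $|\zeta_K(\sigma - it)| \le \zeta_K(\sigma)$ this gives $|\zeta_K(1-\sigma+it)| \le |X_K(1-\sigma+it)|\,\zeta_K(\sigma)$, and one bounds $|X_K(1-\sigma+it)|$ by Stirling's formula. The crucial point is that the exponentially decaying factors $e^{-\pi|t|d/4}$ attached to the gamma functions in the numerator and denominator of $X_K$ cancel, so that $|X_K(1-\sigma+it)|$ is dominated by a \emph{power} of $|1+(1-\sigma)+it|$ together with bounded rational factors in $s$; tracking these carefully reproduces the right-hand side of the asserted inequality evaluated at $\eta = 1-\sigma$, the exponent $\sigma - \tfrac12$ there being exactly $\tfrac{\sigma-\eta}{2}$.

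Third, I would interpolate between the two edges. Since $\zeta_K$ has its simple pole at $s = 1$, which lies inside the strip $1-\sigma \le \Re s \le \sigma$, I would apply Phragm\'en--Lindel\"of not to $\zeta_K$ but to $F(s) := \tfrac{s-1}{s+1}\zeta_K(s)$. This function is holomorphic in the closed strip --- the pole of $(s+1)^{-1}$ at $s=-1$ lies outside it because $\sigma \le \tfrac32 < 2$ --- of finite order there since $\zeta_K$ is polynomially bounded on vertical lines, and, by the two edge estimates above together with $|s-1| \le |s+1|$ for $\Re s \ge 0$, it satisfies $|F(s)| \le \zeta_K(\sigma)$ on $\Re s = \sigma$ and $|F(s)| \ll \zeta_K(\sigma)\,|Q(s)|^{\sigma - 1/2}$ on $\Re s = 1-\sigma$, where $Q(s) := |\Delta_K|\bigl(\tfrac{s+1}{2\pi}\bigr)^d$. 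Rademacher's sharp Phragm\'en--Lindel\"of theorem, which allows the majorant $|Q(s)|^{\ell(\Re s)}$ with $\ell$ the linear interpolant of the boundary exponents --- here $\ell(\eta) = \tfrac{\sigma-\eta}{2}$ --- then yields $|F(\eta+it)| \le C\,\zeta_K(\sigma)\,|Q(\eta+it)|^{(\sigma-\eta)/2}$ for $1-\sigma \le \eta \le \sigma$. Solving back, $|\zeta_K(\eta+it)| \le C\,\zeta_K(\sigma)\,\tfrac{|1+\eta+it|}{|\eta-1+it|}\bigl(|\Delta_K|(\tfrac{|1+\eta+it|}{2\pi})^d\bigr)^{(\sigma-\eta)/2}$, which is the theorem once one checks that the accumulated constants do not exceed $3$.

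The one genuinely delicate point is entirely quantitative: one needs a fully explicit form of Stirling's approximation for $|\Gamma(s/2)|^{r_1}|\Gamma(s)|^{r_2}$, uniform in $\sigma$, $t$, $d$ and $|\Delta_K|$ at once, so that both the cancellation of the exponential factors in $X_K$ and the precise rational factor $\tfrac{|1+\eta+it|}{|\eta-1+it|}$ are made effective, and this must be dovetailed with an explicit-constant version of the sharp Phragm\'en--Lindel\"of principle so that the final constant comes out at most $3$. Everything else --- the functional equation, the trivial bound on $\Re s = \sigma$, the finite-order hypothesis, and passing between powers of $|1+\eta+it|$ and of $|s+1|$ --- is routine. (As the statement is due to Rademacher \cite{R}, in the paper it may of course simply be quoted; the sketch above indicates where the stated shape of the bound comes from.)
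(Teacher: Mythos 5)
The paper does not prove this statement; it is quoted directly from Rademacher \cite{R}, so there is no internal proof to compare against. Your sketch is a fair description of Rademacher's actual argument: the trivial bound $|\zeta_K(\sigma+it)|\le\zeta_K(\sigma)$ on the right edge, the functional equation together with Stirling's formula on the reflected line $\Re s = 1-\sigma$ (with the exponential factors from the two sides of the gamma ratio cancelling, leaving a power of $|s+1|$ and a rational factor), and the sharp Phragm\'en--Lindel\"of theorem applied to the pole-cleared function $\tfrac{s-1}{s+1}\zeta_K(s)$, which is exactly where the factor $\tfrac{|1+\eta+it|}{|\eta-1+it|}$ in the conclusion comes from. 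You are right that the only genuinely delicate part is the bookkeeping: one needs a fully explicit Stirling estimate (uniform in $d$, $t$, $\sigma$) and an explicit-constant version of the interpolation principle so the accumulated constant is at most $3$; one also has to absorb the bounded ratio $\tfrac{|\sigma-it|}{|2-\sigma-it|}$ on the left edge, which you implicitly swept into a $\ll$. Since the theorem is simply cited in the paper, this level of outline is entirely appropriate, and you correctly note as much.
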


\noindent We are now ready to prove the proposition. 

\begin{proof}[Proof of Proposition \ref{coreprop}.]

Henceforth we will use the following notation:
\beq
\int_{(c,T)} f(s) \ ds:= \frac{1}{2\pi i} \int_{c-iT}^{c+iT} 
 f(s) \ ds.
\eeq
Let $T\geq 1$, $2>c>1$ and, until stated otherwise, let $x \in \mathbb{N}+\frac{1}{2}$. By Perron's formula, we have that
\beq
\sum_{n \leq x} c_K(n) = \int_{(c,T)} \zeta_K(s) \frac{x^s}{s} \ ds + O \left(\sum_{n=1}^\infty \left( \frac{x}{n}\right)^c c_K(n) \min\left(1,\frac{1}{T|\log \frac{x}{n}|} \right)  \right). 
\eeq
Thus by Cauchy's residue theorem, we have that for $0<\eta<1$,
\beq
\sum_{n \leq x} c_K(n) =\rho_Kx +E_1+E_2+E_3
\eeq 
where 
\begin{align}
&E_1:=\int_{(\eta,T)} \zeta_K(s) \frac{x^s}{s} \ ds \\
&E_2: = \frac{1}{2\pi i} \left( \int_{\eta+iT}^{c+iT} \zeta_K(s) \frac{x^s}{s} \ ds -\int_{\eta-iT}^{c-iT} \zeta_K(s) \frac{x^s}{s} \ ds  \right) 
\\
&E_3:= O \left(\sum_{n=1}^\infty \left( \frac{x}{n}\right)^c c_K(n) \min\left(1,\frac{1}{T|\log \frac{x}{n}|} \right)  \right).
\end{align}
We can now use Theorem \ref{lem:con} to bound $E_1+E_2+E_3$. Since $(1+\frac{t^2}{\eta^2}) \gg (1+|t|)^2$, we have that
\begin{align} \label{e1bound}
 |E_1|&\ll \left| \int_{(\eta,T)} \zeta_K(s) \frac{x^s}{s} \ ds \right| \\
& \ll_d \left| \Delta_K \right|^{\frac{c-\eta}{2}} \zeta_K(c)x^\eta \int_{-T}^T \frac{(1+|t|)^{d(\frac{c-\eta}{2})+1}}{|\eta-1+it||\eta+it|} \ dt \notag \\
& \ll_d \frac{1}{\eta(1-\eta)}\left| \Delta_K \right|^{\frac{c-\eta}{2}} \zeta_K(c)x^\eta \int_{0}^T (1+t)^{d(\frac{c-\eta}{2})-1} \ dt \notag \\
& \ll_d \frac{1}{\eta(1-\eta)^2}\left| \Delta_K \right|^{\frac{c-\eta}{2}} \zeta_K(c)x^\eta (1+T)^{d(\frac{c-\eta}{2})}. \notag \end{align}
Similarly,
\begin{align}
& |E_2| \ll \left( \left| \int_{\eta+iT}^{c+iT} \zeta_K(s) \frac{x^s}{s} ds\right| +\left| \int_{\eta-iT}^{c-iT} \zeta_K(s) \frac{x^s}{s} ds\right|  \right) \\
& \ll_d \zeta_K(c) (1+T) \int_{\eta}^c \left(\left|\Delta_K\right|(1+T)^{d}\right)^{\frac{c-\lambda}{2}}x^\lambda|\lambda+iT|^{-1}|1-\lambda+iT|^{-1} \ d\lambda. \notag
\end{align}
Since for $T\geq 1$ we have that $|\lambda+iT||1-\lambda+iT| \gg (1+T)^2$, it follows that
\begin{align}
|E_2| & \ll_d  \zeta_K(c)\left|\Delta_K\right|^{\frac{c}{2}} (1+T)^{\frac{dc}{2}-1} \int_{\eta}^c \left(\frac{x}{\sqrt{\left|\Delta_K\right|}(1+T)^{\frac{d}{2}}}\right)^\lambda \ d\lambda \\
& \notag  \ll_d \zeta_K(c)\left|\Delta_K\right|^{\frac{c}{2}} (1+T)^{\frac{dc}{2}-1} \left|\log \left(\frac{x}{\sqrt{\left|\Delta_K\right|}(1+T)^{\frac{d}{2}}}\right) \right|^{-1} \\
& \notag \ \ \ \ \ \ \ \  \ \ \ \ \ \ \ \times \left| \left(\frac{x}{\sqrt{\left|\Delta_K\right|}(1+T)^{\frac{d}{2}}}\right)^c- \left(\frac{x}{\sqrt{\left|\Delta_K\right|}(1+T)^{\frac{d}{2}}}\right)^\eta \right|.
\end{align}
So when
\beq \label{morelog}
\left| \log\left(\frac{x}{\sqrt{\left|\Delta_K\right|}(1+T)^{\frac{d}{2}}}\right)\right| \geq \log(3/2)
\eeq
we have that
\begin{align} \label{e2bound}
|E_2| \ll_d \zeta_K(c)\left( \frac{x^c}{1+T}+x^{\eta} \left|\Delta_K\right|^{\frac{c-\eta}{2}} (1+T)^{\frac{d(c-\eta)}{2}-1} \right).
\end{align}
When
\beq \label{lesslog}
\left| \log\left(\frac{x}{\sqrt{\left|\Delta_K\right|}(1+T)^{\frac{d}{2}}}\right)\right| \leq \log(3/2),
\eeq
we note that $(y^c-y^\eta)/\log(y)$ is bounded uniformly for $y \in [\frac{2}{3},\frac{3}{2}]$ and all specified values of $c$ and $\eta$. So we can just say  $(y^c-y^\eta)/\log(y)\ll y^c$ in this range. 
Thus 
the bound on $E_2$ given in \eqref{e2bound} holds regardless of the relationship between $x$ and $\sqrt{\left|\Delta_K\right|}(1+T)^{\frac{d}{2}}$.

Finally, recalling that $x \in \mathbb{N}+\frac{1}{2}$ and that $|\log(1-x)| \gg |x|$ for $x \in [-1,\frac{1}{2})$, we have 
\begin{align} \label{multistep}
E_3: &= x^c \sum_{n=1}^\infty \frac{c_K(n)}{n^c} \min\left(1, \frac{1}{T|\log \frac{x}{n}|}\right) \leq x^c \sum_{n=1}^\infty \frac{c_K(n)}{n^c} \frac{1}{T|\log \frac{x}{n}|} \\
&\notag \ll \frac{x^c}{T} \zeta_K(c) +\sum_{\frac{x}{2} < n \leq 2x} \left(\frac{x}{n}\right)^c c_K(n) \frac{1}{T|\log \tfrac{x}{n}|} \\
&\notag \ll \frac{x^c}{T } \zeta_K(c) + \frac{1}{T} \sum_{\frac{x}{2} < n \leq 2x} c_K(n) \frac{n}{|x-n|} \notag \\
&\notag  \ll \frac{x^c}{T } \zeta_K(c) +  \frac{C_{d,\varepsilon}x^{1+\varepsilon}}{T}\sum_{\frac{x}{2} \leq n \leq 2x } \frac{1}{|x-n|}, 
\end{align}
for small $\varepsilon>0$, and  $C_{d,\varepsilon}$ is the constant such that
\beq \label{bb2}
c_K(n) \leq C_{d,\varepsilon} n^\varepsilon,
\eeq
for all $n \in \mathbb{N}$.

%

Now since
\beq
\sum_{\frac{x}{2} \leq n \leq 2x} \frac{1}{|n-x|}\leq2 \sum_{j=0}^{x-\frac{1}{2}} \frac{1}{j+\frac{1}{2}} \ll \log(1+x),
\eeq
if we let $\varepsilon=(c-1)/2$, then for $T\geq 1$ we have that \eqref{multistep} becomes
\begin{align} \label{e3bound}
|E_3|& \ll_\delta \frac{x^c}{1+T} \zeta_K(c)+ \frac{C^{(2)}_{d,\frac{c-1}{2}}x^{\frac{c+1}{2}}}{1+T}.
\end{align}
where the implicit constant $C_{d,\frac{c-1}{2}}$ is changed to $C^{(2)}_{d,\frac{c-1}{2}}$ to account for the implied constant in the bound $ \log(x) \ll_\vep x^\vep $. We remark that we do not bother measuring the contribution of the degree $d$, nor that of $\delta=c-1$, in our main theorem as  $C^{(2)}_{d,\frac{\delta}{2}}$ is likely much worse than reality. It is unclear at present how to remove the dependence on this term. 

 Combining \eqref{e1bound}, \eqref{e2bound}, and \eqref{e3bound} we get that
\beq \label{alles}
|E_1|+|E_2| + |E_3| \ll_d  \zeta_K(c) \left(\frac{x^\eta\left(\left| \Delta_K \right|(1+T)^{d}  \right)^{\frac{c-\eta}{2}}}{\eta(1-\eta)^2}+\frac{x^c}{1+T}\right) + \frac{C^{(2)}_{d,\frac{c-1}{2}}x^{\frac{c+1}{2}}}{1+T}.
\eeq
For some small $\delta >0$ such that $\frac{1}{3d} > \delta$, let $c=1+\delta$ and $\eta=1-\delta$, then let
\beq \label{tcon}
1+T =x^{\frac{2\delta}{1+d\delta}}.
\eeq
From this, \eqref{alles} becomes 
\begin{align} \label{main}
& \sum_{n \leq x} c_K(n) =\rho_K x+O_{d,\delta}\left((\zeta_K(1+\delta) |\Delta_K|^\delta+1) x^{1-\frac{\delta}{2}}    \right)
\end{align}
when $x\in \mathbb{N}+\frac{1}{2}$ and, since $T \geq 1$, we also have the constraint $x \geq Q$ due to \eqref{tcon}, where $Q$ is a constant dependent on $\delta$ and $d$. For $x<Q$ we can just let $T = 1$ in \eqref{alles}, and since $x^c \leq x^\eta Q^{2\delta}$ in this range we can say \eqref{main} holds for all $x \in \mathbb{N}+\frac{1}{2}$ and $x\geq 1$. 

For $x \notin \mathbb{N}+ \frac{1}{2}$ we can replace $x$ with $\lfloor x \rfloor + \frac{1}{2}$ in $\sum_{n \leq x} c_K(n)-\rho(x)$ and note the difference will be on the order of $\rho_K$ at most. Since $\zeta_K(1+\delta) \sim \frac{\rho_K}{\delta}$ we can replace $\zeta_K(1+\delta)$ with $\zeta_K(1+\frac{\delta}{2})$ to supersede that $\rho_K$ term, and thus get \eqref{main} for all $x$ and complete the proof of the proposition.
%
\end{proof}

\bibliography{bertrandbib}	
\bibliographystyle{plain}	
\end{document}